\documentclass[a4paper,11pt]{amsart}
\usepackage{graphicx}
\usepackage{amssymb}
\usepackage{epstopdf}
\usepackage{nicefrac}
\usepackage{mathrsfs}
\usepackage{amsfonts}
\usepackage{amsthm}
\usepackage{enumerate}

\DeclareGraphicsRule{.tif}{png}{.png}{`convert #1 `dirname #1`/`basename #1 .tif`.png}

\parskip = 6pt
\parindent = 0.0in

\hoffset=-.55in
\voffset=-.2in
\setlength{\textwidth}{6in}
\setlength{\textheight}{8.5in}

\newtheorem{thm}{Theorem}[section]

\newtheorem{defn}[thm]{Definition}

\newtheorem{lemma}[thm]{Lemma}

\newtheorem{prop}[thm]{Proposition}


\newcommand{\exo}{\Sigma}





\newcommand{\mH}{{\mathbb H}}

\newcommand{\mR}{{\mathbb R}}

\newcommand{\mZ}{{\mathbb Z}}


\newcommand{\cD}{{\mathcal D}}

\newcommand{\cL}{{\mathcal L}}
\newcommand{\cM}{{\mathcal M}}

\newcommand{\Met}{\mathcal{MET}}
\newcommand{\cT}{{\mathcal T}}




\newcommand{\lra}{\longrightarrow}
\newcommand{\bgd}{\begin{displaymath}}
\newcommand{\edd}{\end{displaymath}}

\begin{document}

\title[Teichm\"uller space of Gromov Thurston Manifolds]{Teichm\"uller space of negatively curved metrics on Gromov Thurston Manifolds is not contractible}

 
\author{Gangotryi Sorcar}
\address{Department of Mathematics, SUNY Binghamton, 4400 Vestal Pkwy E  Binghamton, NY 13902}
\email{sorcar-at-math-dot-binghamton-dot-edu}
\thanks{date: September 17, 2013}

\date{}
 
\keywords{}

\maketitle

\begin{abstract}
In this paper we prove that for all $n=4k-2$, $k\ge2$ there exists closed $n$-dimensional Riemannian manifolds $M$ with negative sectional curvature that do not have
the homotopy type of a locally symmetric space, such that $\pi_1(\cT^{<0}(M))$ is  non-trivial. $\cT^{<0}(M)$
denotes the Teichm\"uller space of all negatively curved Riemannian metrics on $M$, which is the topological quotient of the space of all 
negatively curved metrics modulo the space of self-diffeomorphisms of $M$ that are homotopic to the identity. Gromov Thurston branched cover manifolds provide examples
of negatively curved manifolds that do not have the homotopy type of a locally symmetric space. These manifolds will be 
used in this paper to prove the above stated result.
\end{abstract}

\tableofcontents

\section{Introduction}\label{sec-intro}

Let $M$ be a closed smooth Riemannian manifold. Let us now look at the following notations we shall use in this paper.

\begin{itemize}
\item $\Met(M)$ denotes the space of all Riemannian metrics on $M$ with smooth topology. [Note that $\Met(M)$ is contractible. 
Any two metrics can be joined by a line segment in the space of metrics, as the convex combination of two metrics is also a metric.]
\item Diff$(M)$ is the group of all smooth self-diffeomorphisms of $M$. $\text{Diff}(M)$ acts on $\Met(M)$ by pushing forward metrics, 
that is, for any $f$ in Diff$(M)$ and any metric $g$ in $\Met(M)$, $f_*g$ is the metric such that $f:(M,g)\rightarrow(M,f_*g)$ is an isometry.
\item $\text{Diff}_0(M)$ stands for the subgroup of Diff$(M)$ consisting of all smooth self diffeomorphisms of the manifold $M$ 
that are homotopic to the identity. 
\item $\cD_0(M)$ is the group $\mR^+\times$ Diff$_0(M)$. 

\item $\cD_0(M)$ acts on $\Met(M)$ by scaling and pushing forward metrics, that is, when $(\lambda,f) \in \cD_0(M)$ and $g \in$ $\Met(M)$, 
$(\lambda,f)g=\lambda(f)_*g$. The Teichm\"uller space of all metrics on $M$ is defined to be $\cT(M):=\Met(M)/\cD_0(M)$.
\item Similarly, the Teichm\"uller space of negatively curved metrics on $M$ is defined as $\cT^{<0}(M):=\Met^{<0}(M)/\cD_0(M)$.
\end{itemize}

Since a geometric property of a manifold is often thought of as one which is invariant under isometries and scalings, two Riemannian metrics being equal in 
$\cT(M)$ implies that they impart the same ``marked geometry'' on the manifold. More on this can be found in \cite{F}.

{\it{Remark.}} If $M$ is an orientable surface of genus $g >1$, the Teichm\"uller space of $M$ is defined to be the quotient of the space 
of all hyperbolic metrics by the action of the group Diff$_0(M)$, 
but it is not effective to generalize it for manifolds with dimension $\ge 3$ as Mostow's rigidity theorem implies that, in dimensions $3$ or more, 
this space contains (at most) one point.

Studying these spaces is of fundamental importance in geometry and also other areas, one could see for example \cite{BES} on Einstein Manifolds
or \cite{FO2} on fiber bundles with negatively curved fibers. \newline

In this paper we prove:
\begin{thm}\label{mainthm}
For every positive integer $n=4k-2$ where $k$ is an integer more than $1$, there is a negatively curved manifold $M^n$, that does not have the homotopy type of 
a locally symmetric space, such that $\pi_1(\cT^{<0}(M^n)) \ne 0$. Therefore $\cT^{<0}(M)$ is not contractible. 
\end{thm}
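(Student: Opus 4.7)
The plan is to exploit the $\mS^1$-symmetry coming from the Gromov--Thurston branched-cover construction: I would manufacture an explicit closed loop of negatively curved metrics on $M$ and detect its non-triviality in $\cT^{<0}(M)$ by a characteristic-class invariant concentrated along the ramification locus, following the detection strategy of Farrell--Ontaneda.

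First, fix $k\ge 2$ and choose a closed hyperbolic manifold $N^{4k-2}$ containing a codimension-two totally geodesic submanifold $\exo$, together with a large integer $p$; let $M\to N$ be the associated $p$-fold cyclic branched cover ramified along $\exo$. For appropriate $p$, Gromov and Thurston provide a smooth negatively curved metric $g_0$ on $M$ and show that $M$ is not homotopy equivalent to any locally symmetric space, which handles two of the three assertions of the theorem. The branched-cover structure endows $M$ with a smooth $\mS^1$-action $\{\vp_t\}$ by rotation about the preimage $\widetilde\exo\subset M$ of the branch locus, with each $\vp_t\in\text{Diff}_0(M)$. The naive loop $t\mapsto(\vp_t)_*g_0$ in $\Met^{<0}(M)$ is the image of a loop in $\cD_0(M)$ and is therefore null-homotopic in $\cT^{<0}(M)$; so I would deform it, for instance by grafting in a twisted smoothing of the metric on a tubular neighborhood of $\widetilde\exo$, to obtain a closed path of negatively curved metrics whose class $\alpha\in\pi_1(\cT^{<0}(M))$ does \emph{not} lie in the image of
\[
\pi_1(\cD_0(M))\longrightarrow\pi_1(\Met^{<0}(M))
\]
in the homotopy long exact sequence of the principal quotient $\cD_0(M)\to\Met^{<0}(M)\to\cT^{<0}(M)$.

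The main step is to show $\alpha\ne 0$. I would assemble $\alpha$ into a smooth fiber bundle $E\to\mS^1$ with fiber $M$ and a fiberwise negatively curved metric. A null-homotopy of $\alpha$ in $\cT^{<0}(M)$ would, after absorbing a loop in $\cD_0(M)$, extend the fiberwise metric over a $2$-disk, forcing a higher-torsion or Pontryagin-type invariant of $(E,g_\bullet)$ to vanish; conversely $\alpha\ne 0$ would follow from a non-vanishing residue computation on the ramification tube, where the Euler class of the rank-two normal bundle of $\widetilde\exo$, combined with the $p$-fold twist data, contributes a non-zero term in $H^{4k-1}(E;\mR)$. The dimension $n=4k-2$ enters exactly here, since $\dim E=4k-1$ is the degree in which an Igusa--Klein higher-torsion class, or a fiber-integrated Pontryagin form, can detect the branched-cover twist. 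The hard part will be this detection: the invariant has to be (i)~well defined on $\cT^{<0}(M)$, i.e.\ invariant under the $\cD_0(M)$-action and under deformations through negatively curved metrics, and (ii)~explicitly computable and non-zero on $\alpha$. I expect (i) to require either a local index-theoretic argument or a comparison with the smooth pseudo-isotopy spectrum $\cP(M)$ through the Farrell--Jones theorems, while (ii) should reduce to a local residue calculation on the normal bundle of $\widetilde\exo$ whose non-vanishing is forced by the Gromov--Thurston branching data in dimension $4k-2$.
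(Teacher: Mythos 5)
Your proposal takes a genuinely different route from the paper, and it has several gaps that would prevent it from going through as stated.

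\textbf{The $\mS^1$-action does not exist.} The Gromov--Thurston manifold is an $s$-fold \emph{cyclic} branched cover, built by gluing $s$ copies of $N$ cut along a hypersurface $W$. The global symmetry group near the branch locus is $\mZ/s$, not $\mS^1$: rotation by an angle that is not a multiple of $2\pi/s$ misaligns the glued copies of $\overline{N}$ and is not a self-diffeomorphism of $M$. Your entire starting point, the family $\{\vp_t\}_{t\in\mS^1}$ in $\text{Diff}_0(M)$, is therefore unavailable. The paper instead twists \emph{away} from the branch locus, inside a geodesic ball sitting in the region where $g_s$ is constant curvature $-1$ (this is exactly why Theorem~\ref{thm:injrad} provides a point $p$ with $d(p,V)>3\alpha$). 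That choice is also what makes the later metric deformation tractable: the deformations of \S\ref{subsec:met} are carried out entirely in the hyperbolic region, where explicit warped-product formulas are available and the Farrell--Jones / Ontaneda curvature estimates apply.

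\textbf{The detection mechanism is missing its essential input.} The paper does not proceed by constructing a loop of metrics and computing a characteristic or higher-torsion invariant of a fiber bundle over $\mS^1$; it constructs an element $[f]\in\pi_0(\text{Diff}_0(M))$ and shows $[f]\neq 0$ by a signature computation. The key ingredient is an exotic sphere $\exo^{4k-1}$ that \emph{bounds a parallelizable manifold} $W$. The diffeomorphism $f$ is manufactured from the gluing diffeomorphism of $\exo$, run through Cerf's pseudoisotopy theorem, so that $(M\times I)_f$ is $(M\times\mS^1)\#\exo$. An isotopy of $f$ to the identity would give a diffeomorphism $T\#\exo\to T$ (with $T=M\times\mS^1$), and gluing $W$ into $T\times[0,1]$ produces a closed $4k$-manifold $\cM$ whose signature, via the Hirzebruch index theorem plus the fact that $P(M)$ and $P(W)$ are trivial (Ardanza's thesis), reduces to a single term $s_k\langle p_k,\mu_{4k}\rangle$. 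Combined with the Kervaire--Milnor description of $\Theta^{4k-1}(\partial\pi)$ and the existence of a prime $>2k+1$ dividing the numerator of $s_k$, one can choose $\exo=\exo_i$ with $i$ coprime to that prime to force a contradiction. None of this appears in your proposal: there is no exotic sphere, no parallelizable filling, and no use of the vanishing of Pontryagin classes of $M$, which is precisely what makes the signature localize. The Igusa--Klein higher-torsion or fiber-integrated Pontryagin residue you invoke is not obviously well defined on $\cT^{<0}(M)$, and you give no mechanism by which the branched-cover twist alone (without an exotic-sphere input) would make it nonzero; in fact, the ``naive'' deformation you propose, being driven by data from the branch locus, has no reason to be nontrivial at all.

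\textbf{The metric-path step is underestimated.} Even granting a nontrivial $[f]$, showing that it dies in $\pi_0(\Met^{<0}(M))$ --- that is, that $f_*g_s$ and $g_s$ are joined by a path of negatively curved metrics --- is a real step, handled in \S\ref{subsec:met} by a contractibility lemma from Farrell--Ontaneda and a careful warped-product interpolation. You wave at ``grafting in a twisted smoothing'' near the branch tube, but there the metric is not constant curvature and the needed curvature control would be much harder to obtain than in the hyperbolic region the paper uses.

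In short: the dimension restriction $n=4k-2$ in the paper arises from the Hirzebruch/Kervaire--Milnor signature argument for $4k$-manifolds, not from a degree count for higher torsion; the twist is localized in the hyperbolic region, not at the ramification locus; and the nontriviality is an exotic-sphere phenomenon, not a branched-cover phenomenon.
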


{\bf {Idea of proof:}} Suppose M is any manifold. Consider the sequence:\\
\bgd
 \cD_0(M) \lra \Met^{<0}(M) \lra \frac{\Met^{<0}(M)}{\cD_0(M)}=:\cT^{<0}(M).
\edd

By the work of Borel, Conner and Raymond \cite{BCR} one gets that $\cD_0(M)$ acts freely on $\Met(M)$ and more details on this can be found in page 51 of 
\cite{FO1}. Then by using Ebin's Slice Theorem one can deduce that the above sequence is a fibration.

Hence from the above fibration we get a long exact sequence in homotopy, part of which we use and is shown below:
\bgd
\pi_1(\Met^{<0}(M)) \lra \pi_1(\cT^{<0}(M)) \lra \pi_0(\cD_0(M)) \lra \pi_0(\Met^{<0}(M)).
\edd

We want to come up with a $y \ne 0$ in $\pi_1(\cT^{<0}M)$. To this end, we shall first find $[f] \ne 0$ in $\pi_0(\cD_0(M))$ such that
it maps to zero in $\pi_0(\Met^{<0}(M))$. Therefore $[f]$ can be pulled back in $\pi_1(\cT^{<0}(M))$, and since the pullback of a non-zero element cannot be zero,
this pullback serves as the $y\ne0$ in $\pi_1(\cT^{<0}(M))$ we are looking for. 

In section \ref{sec-proof} when we prove this theorem we shall construct this $[f]\in \pi_0(\text{Diff}_0(M))$ and follow through with the above idea.
This is fine because $\text{Diff}_0(M)$ is a subset of $\cD_0(M)$.

Also, we work with manifolds that are negatively curved but not of the homotopy type of locally symmetric spaces and for this we will
use the Gromov-Thurston manifolds as described in the following section.

\section{Gromov-Thurston Manifolds}\label{sec-GTmani}

In 1987, in their paper \cite{G-T}, Gromov and Thurston introduced examples of manifolds that cannot be hyperbolic but support metrics of negative curvature.
In fact these manifolds do not have the homotopy type of locally symmetric spaces. They explicitly construct these manifolds by taking branched covers over 
a codimension two totally geodesic submanifold of a closed real hyperbolic manifold. 

Let $N^n$ be a closed, orientable, hyperbolic manifold such that $V^{n-2} \subset W^{n-1} \subset N$ with $\partial W = V$, where $V$ and $W$ are oriented 
totally geodesic submanifolds of $N$. $N$ is cut along $W$ to produce a new manifold $\overline{N}$
that has boundary the union of two copies of $W$, i.e. $W \sqcup (-W)$. We write $W$ and $-W$ to denote that the orientation on $N$ induces 
opposite orientations in the two boundary components. Fixing an integer $s$ we can take $s$ copies of $\overline{N}$ and paste them along their boundaries 
$W \sqcup (-W)$ in a way compatible with their orientations. 
In this way we obtain a manifold $N_s$ and a map $\rho:N_s\rightarrow N$ that is a branched cover of $N$ along $V$.

$N_s$ can be given a Riemannian metric that has negative curvature which is of constant negative curvature outside a tubular neighborhood of the branching set $V$.
The detailed construction of this can be found in \cite{G-T} and also in the second chapter of Ardanza's thesis \cite{ARD}. We will quote some defintions
and results we use from the latter:

\begin{defn}
 Given $V$ compact totally geodesic submanifold of a Riemannian manifold $N$ we denote by $Rad^\perp(V)$ the normal injectivity radius of $V \subset N$ that is the
 greatest real number $r$ for which the open neighborhood $U_r\subset N$ of $V$ is diffeomorphic to the normal bundle of $V$ in $N$.
\end{defn}

\begin{prop}\label{prop:norinjrad}
Given $N^n$ hyperbolic manifold, $V^{n-2}$ totally geodesic submanifold that bounds in $N$ with $Rad^\perp(V) \ge R$, and $\rho:N_s\rightarrow N$, a branched
cover of $N$ as described before, there is a metric $g_s$ on $N_s$ with negative sectional curvature $K$ such that $K\equiv -1$ outside a tubular neighborhood of 
radius $R$ of the branching set $V$. 
\end{prop}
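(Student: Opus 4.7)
The plan is to build $g_s$ by modifying the pullback of the hyperbolic metric only inside a tubular neighborhood of $V$ of radius $R$. Outside that neighborhood $\rho:N_s\to N$ is an ordinary covering map, so $\rho^* g_{\text{hyp}}$ is smooth there and has constant sectional curvature $-1$ automatically; all the work is concentrated near the branching set.

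In Fermi coordinates $(v,r,\theta)$ around $V$ in $N$, the hyperbolic metric takes the warped-product form
\[
g_{\text{hyp}} = \cosh^2(r)\, g_V + dr^2 + \sinh^2(r)\, d\theta^2, \qquad \theta \in \mR/2\pi\mZ,
\]
valid on $r < R$. Lifting to $N_s$, the branching is realized in the angular coordinate by $\theta = s\theta'$ with $\theta'\in\mR/2\pi\mZ$, so $\rho^* g_{\text{hyp}} = \cosh^2(r)\, g_V + dr^2 + s^2\sinh^2(r)\, d\theta'^2$; this is smooth off $V$ but has cone angle $2\pi s$ along it. I would replace the $\theta'$-warping $s\sinh(r)$ by a new function $f(r)$, defining
\[
g_s = \cosh^2(r)\, g_V + dr^2 + f(r)^2\, d\theta'^2
\]
on the tubular neighborhood and $g_s = \rho^* g_{\text{hyp}}$ outside. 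I would require $f(r) = s\sinh(r)$ in a neighborhood of $r = R$ so that the two pieces agree $C^\infty$ on overlap, and $f$ smooth and odd through $r = 0$ with $f'(0)=1$ so that $g_s$ extends smoothly across $V$.

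Next I would compute the sectional curvatures of $g_s$ from the standard multiply-warped-product formulas. Because the $g_V$-factor is still warped by $\cosh(r)$, sectional curvatures of planes tangent to $V$ and of planes containing $\partial_r$ and a vector tangent to $V$ are all $-1$, exactly as in the hyperbolic model. The remaining principal curvatures are $-f''(r)/f(r)$ on the $(\partial_r,\partial_{\theta'})$-plane and $-\tanh(r)\, f'(r)/f(r)$ on mixed $(V,\partial_{\theta'})$-planes. Requiring $f>0$, $f'>0$, $f''>0$ on $(0,R)$ makes both strictly negative; the block structure of the curvature tensor of a multiply-warped product of this form then yields a negative sectional curvature on every 2-plane by a direct expansion.

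The main obstacle is constructing the warping function $f$. It must be $C^\infty$ on $[0,R]$, odd about $r=0$ with $f'(0)=1$, equal to $s\sinh(r)$ in a neighborhood of $r=R$, and strictly convex and positive on $(0,R)$. A naive convex combination of $r$ and $s\sinh(r)$ through a cut-off need not stay convex because the cut-off's second derivative enters with the wrong sign; the cleaner route is to prescribe $f''$ as a smooth non-negative bump supported in the interior of $(0,R)$ whose integral equals $f'(R)-f'(0) = s\cosh(R)-1$, adjust it so that all higher derivatives of $f$ match those of $s\sinh(r)$ at $r=R$, and then double-integrate to recover $f$ and $f'$. Since $s\geq 2$ this slope increase is strictly positive, the construction is feasible, and the explicit details are carried out in \cite{G-T} and in \cite{ARD}.
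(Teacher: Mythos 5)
The paper itself does not prove Proposition~\ref{prop:norinjrad}; it quotes it, referring to \cite{G-T} and Chapter~2 of \cite{ARD} for the construction. Your sketch is in fact the Gromov--Thurston argument from those references: write the hyperbolic metric near $V$ in Fermi form $\cosh^2(r)g_V+dr^2+\sinh^2(r)d\theta^2$, observe that the pullback to $N_s$ only distorts the angular warping, and replace $s\sinh(r)$ by a convex interpolating function $f(r)$. The curvature formulas you list for the four types of coordinate 2-planes are correct, and the claim that the warped-product block structure reduces the general case to these is the standard (if computation-heavy) step. So the framework and the reduction are right.

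Where the sketch has a real gap is in the final construction of $f$, and it is worth being precise because this is the whole content of the proposition. You ask simultaneously that $f''$ be a non-negative bump ``supported in the interior of $(0,R)$'' and that $f\equiv s\sinh(r)$ near $r=R$; these are incompatible, since the first forces $f$ to be affine near $R$ while the second requires $f''=s\sinh(r)\neq 0$ there. The parenthetical ``adjust it so that all higher derivatives match'' does not resolve this --- it is the adjustment that has to be carried out, and it changes the shape of the problem: one must choose a smooth $f''>0$ on $(0,R)$ that equals $s\sinh(r)$ near $R$, vanishes to the right order at $0$, and satisfies the two integral constraints $\int_0^R f''=s\cosh R-1$ and $\int_0^R (R-t)f''(t)\,dt=s\sinh R-R$ (so that both $f'(R)$ and $f(R)$ come out right after double integration). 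A second, smaller, issue: oddness of $f$ with $f'(0)=1$ forces $f''(0)=0$, so your hypothesis ``$f''>0$ on $(0,R)$'' says nothing about the curvature on $V$ itself, where $K(\partial_r,\partial_{\theta'})\to -f'''(0)$; you need $f'''(0)>0$, or, more cleanly, simply arrange $f\equiv\sinh(r)$ near $r=0$ so the metric is genuinely hyperbolic in a smaller collar of $V$ and the smoothness and negativity at $r=0$ are automatic. With those two repairs your outline matches the cited construction.
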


In \cite{G-T} Gromov and Thurston also talk about certain special closed real hyperbolic manifolds that they call the $\phi_n$-manifolds.
The $\phi_n$-manifolds are closed hyperbolic manifolds with large injectivity radius. They are constructed by taking quotients of the hyperbolic $n$-space $\mH^n$ 
by a discrete, cocompact, torsion free subgroup of the isometries of $\mH^n$.

A result from \cite{G-T} about these $\phi_n$ manifolds is:
\begin{thm}\label{thm:thm1}
 For every $n>2$ and every $R>0$ there exists an orientable $\phi_n$-manifold $N$ and a totally geodesic oriented submanifold $V$ of codimension $2$ in $N$
 such that:\\[-0.6cm]
 \begin{enumerate}[(i)]
  \item $Rad^\perp(V) \ge R$
  \item $[V]=0 \in H_{n-2}(N)$; moreover $V$ bounds a connected oriented totally geodesic hypersurface $W \subset N$.
 \end{enumerate}
\end{thm}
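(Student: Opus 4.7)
The plan is to construct $N$ as an arithmetic closed hyperbolic $n$-manifold of simplest type, defined by a quadratic form of signature $(n,1)$ over a totally real number field $k \ne \mQ$, and to realize $V$ and $W$ as (halves of) fixed sets of commuting coordinate-reflection involutions. The guiding observation is that the fixed set of an isometric involution on a closed Riemannian manifold is automatically a totally geodesic, two-sided, embedded submanifold, hence separating whenever it is connected; this reduces the whole problem to producing a finite cover on which both reflections descend and their fixed loci have the desired connectivity, while the injectivity radius is as large as we wish.

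Concretely, take $k = \mQ(\sqrt{2})$ with ring of integers $\cO$ and $\alpha \in \cO$ positive under one real embedding and negative under the other. Put $q(x) = -\alpha x_0^2 + x_1^2 + \cdots + x_n^2$ on $k^{n+1}$. Then by Borel--Harish-Chandra and the anisotropy of $q$ over $k$, $\Gamma_0 := \mathrm{O}(q;\cO)$ is a cocompact arithmetic lattice in $\mathrm{O}(n,1)$, and $\mH^n/\Gamma_0$ is a $\phi_n$-orbifold. The reflections $\sigma_1:x_1\mapsto -x_1$ and $\sigma_2:x_2\mapsto -x_2$ belong to $\Gamma_0$, commute, and fix in $\mH^n$ totally geodesic subspaces $\mH^{n-1}$ and $\mH^{n-2}$ respectively. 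Now pass to a principal congruence subgroup $\Gamma \trianglelefteq \Gamma_0^+$ of sufficiently deep level ideal $\mathfrak{a}$, chosen $\sigma_1,\sigma_2$-invariant; for large enough $\mathfrak{a}$, $\Gamma$ is torsion-free, and the orientable closed hyperbolic manifold $N := \mH^n/\Gamma$ has global injectivity radius exceeding any preassigned $R$.

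Because $\sigma_1,\sigma_2$ normalize $\Gamma$ without lying in it, they descend to commuting orientation-reversing isometric involutions $\bar\sigma_1,\bar\sigma_2$ of $N$, whose fixed sets $\widetilde W := \mathrm{Fix}(\bar\sigma_1)$ and $\widetilde V := \mathrm{Fix}(\bar\sigma_1) \cap \mathrm{Fix}(\bar\sigma_2)$ are embedded, totally geodesic, two-sided submanifolds of codimensions $1$ and $2$, with $\widetilde V \subset \widetilde W$. After possibly replacing $N$ by a further $\langle\sigma_1,\sigma_2\rangle$-invariant deep cover, choose a component $\widetilde W_{\mathrm{c}}$ of $\widetilde W$ and a component $V \subset \widetilde W_{\mathrm{c}}$ of $\widetilde V \cap \widetilde W_{\mathrm{c}}$ such that $V$ separates $\widetilde W_{\mathrm{c}}$ into two connected pieces; take $W$ to be one such piece, viewed as a connected, oriented, totally geodesic hypersurface with boundary. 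Then $\partial W = V$ and hence $[V]=0 \in H_{n-2}(N)$, giving (ii), while $\mathrm{Rad}^\perp(V) \ge \mathrm{inj}(N) > R$ gives (i).

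The main technical obstacle is the connectivity/separation step: one must guarantee in a single cover the existence of a component $\widetilde W_{\mathrm{c}}$ of $\widetilde W$ inside which a component of $\widetilde V$ actually separates. I would handle this by iterating the congruence construction --- each component of $\widetilde W$ (resp.\ $\widetilde V$) at a deeper level covers a component at the previous level as a finite Galois cover of its stabilizer quotient, so standard residual-finiteness / covering-space arguments applied to the stabilizers of $\mH^{n-1}$ and $\mH^{n-2}$ in $\Gamma_0$ eventually force both connectivity of the chosen components and separation of one by the other, all while preserving $\langle\sigma_1,\sigma_2\rangle$-invariance and the injectivity-radius lower bound. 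This produces the desired orientable $\phi_n$-manifold $N$ equipped with $V \subset W$ as claimed.
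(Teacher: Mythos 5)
First, a framing note: the paper quotes Theorem~\ref{thm:thm1} from Gromov--Thurston \cite{G-T} without proof, so there is no in-paper argument to compare against; the comparison below is to the known argument in \cite{G-T}, which itself builds on Millson's earlier arithmetic construction. Your overall route --- a cocompact arithmetic lattice from a form of signature $(n,1)$ over a real quadratic field, a deep congruence subgroup chosen invariant under two commuting coordinate reflections, with $V\subset W'$ arising from the descended hyperplanes --- is essentially that intended approach.

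That said, your ``guiding observation'' contains a genuine error that undercuts the framing. It is \emph{not} true that the fixed set of an isometric involution, being a connected two-sided hypersurface, therefore separates. The diagonal circle $\{x=y\}$ in the flat torus $\mR^2/\mZ^2$ is the fixed set of the isometric involution $(x,y)\mapsto(y,x)$; it is connected and two-sided, yet cutting the torus along it yields a single annulus, so it does not separate. In fact the entire point of Millson's construction is that totally geodesic hypersurfaces in arithmetic hyperbolic manifolds need \emph{not} separate --- that is precisely how he obtains nonzero classes in $H_{n-1}$. So forcing $V$ to separate a component of $\widetilde W$ is the substantive arithmetic work, not a consequence of connectivity; you correctly flag it as the obstacle later, but the motivating claim that precedes it is false and the deferral to ``standard residual-finiteness / covering-space arguments'' leaves unstated what property of the congruence tower actually kills the relevant $\mZ/2$-intersection numbers. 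This is exactly where the content of \cite{G-T} lies, and the sketch stops short of it.

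Two smaller problems. The inequality $\mathrm{Rad}^\perp(V)\ge\mathrm{inj}(N)$ is false in general (on a flat square torus of injectivity radius $1/2$, the closed geodesic in direction $(4,3)$ has normal injectivity radius $1/10$); for the arithmetic $V$ the bound $\mathrm{Rad}^\perp(V)>R$ does hold after deepening the level, but it must be argued directly from the congruence condition on the group elements that return a short normal geodesic to $V$, not deduced from the global injectivity radius. And $\sigma_2$ alone fixes the hyperplane $\{x_2=0\}$, an $\mH^{n-1}$; it is the intersection $\mathrm{Fix}(\sigma_1)\cap\mathrm{Fix}(\sigma_2)$ that is the $\mH^{n-2}$.
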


The above theorem ensures that the topological construction of the branched coverings can be performed on these $\phi_n$-manifolds. In \cite{G-T} they prove
that certain ones among these branched covers do not have the homotopy type of locally symmetric manifolds, hence in particular cannot admit a metric
of constant sectional curvature $-1$.

\vspace{5mm}

From \cite{ARD} we have the following:
\begin{thm}\label{thm:injrad}
 Let $N$ be a $\phi_n$-manifold and $V$ be a totally geodesic oriented submanifold of codimension $2$ in $N$, such that
 injectivity radius $Rad(N)\ge10\alpha$ for some fixed $\alpha > 0$, and normal injectivity radius $Rad^\perp(V)\ge 5\alpha$.
 Then there is a metric $g_s$ on the branched cover $N_s$ such that if $K(g_s)$ is the sectional curvature then:\\[-0.6cm]
 \begin{enumerate}[(i)]
  \item $K(g_s)<0$ in a tubular neighborhood $T_R(V)$ of radius $R<\alpha$.
  \item $K(g_s)\equiv-1$ in $N_s-T_R(V)$
  \item There exists $p \in N_s - T_R(V)$ such that $d(p,V)>3\alpha$. 
 \end{enumerate}
\end{thm}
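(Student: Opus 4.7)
The first two conditions follow almost immediately from Proposition~\ref{prop:norinjrad}. Since $5\alpha \le Rad^\perp(V)$, I would fix any $R$ with $0 < R < \alpha$ and apply that proposition to obtain a metric $g_s$ on $N_s$ with negative sectional curvature everywhere and $K(g_s)\equiv -1$ outside the tubular neighborhood $T_R(V)$ of the branching set. This takes care of (i) and (ii) directly, and the remaining work is to produce the point $p$ in (iii).

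For (iii), the strategy is to exhibit a point $q \in N$ at hyperbolic distance at least $4\alpha$ from $V$, lift it to a point $\tilde q \in N_s$, and use the fact that $\rho\colon N_s\to N$ is a local isometry outside $T_R(V)$ to transfer the distance estimate to $(N_s,g_s)$. The hypothesis $Rad^\perp(V) \ge 5\alpha$ says that the normal exponential map is a diffeomorphism on normal vectors of length less than $5\alpha$, so for any $v$ perpendicular to $V$ with $|v|=4\alpha$, the point $q=\exp^\perp(v)$ has normal distance $4\alpha$ from $V$. The additional assumption $Rad(N)\ge 10\alpha$ rules out shortcuts: any competing curve from $q$ to $V$ would have to exit the embedded hyperbolic ball of radius $10\alpha$ about $q$, hence has length at least $10\alpha > 4\alpha$, so $d_N(q,V)=4\alpha$.

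Finally, pick any preimage $\tilde q \in \rho^{-1}(q)$ and let $\gamma$ be an arbitrary curve in $N_s$ from $\tilde q$ to a point of $V$; let $\tilde w$ denote its first entry point into $T_R(V)\subset N_s$. The portion of $\gamma$ from $\tilde q$ to $\tilde w$ lies in $N_s - T_R(V)$, on which $g_s$ is pulled back from the hyperbolic metric on $N - T_R(V)$ by $\rho$. Thus its $g_s$-length equals the hyperbolic length of its projection, which is a curve in $N$ from $q$ to $\partial T_R(V)$ of length at least $d_N(q,V) - R = 4\alpha - R > 3\alpha$. Since this lower bound is independent of how $g_s$ behaves inside $T_R(V)$, we conclude $d_{g_s}(\tilde q, V) > 3\alpha$, and $p := \tilde q$ is the required point.

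The main technical point requiring care is the assertion that $\rho$ is a genuine Riemannian local isometry between $(N_s - T_R(V), g_s)$ and the corresponding hyperbolic subset of $N$; this is what justifies comparing lengths and is built into the Gromov--Thurston construction of $g_s$ summarized above, allowing one to avoid any direct computation inside the warped region $T_R(V)$. Everything else is a routine exercise in hyperbolic geometry, and the two radius hypotheses $Rad^\perp(V)\ge 5\alpha$ and $Rad(N)\ge 10\alpha$ are precisely what are needed to produce the distant point $q$ and to certify that its normal distance from $V$ really is the intrinsic distance.
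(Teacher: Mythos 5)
The paper does not supply a proof of this theorem: it is quoted verbatim from Chapter~2 of Ardanza's thesis \cite{ARD} and relied on as a black box. So your proposal is necessarily an independent reconstruction. Your treatment of (i) and (ii) via Proposition~\ref{prop:norinjrad} is fine, and the overall structure you use for (iii) --- produce a point $q\in N$ at hyperbolic distance $4\alpha$ from $V$, lift it to $\tilde q\in N_s$, and push length estimates through the Riemannian covering $\rho$ on $N_s - T_R(V)$ to conclude $d_{g_s}(\tilde q, V) > 4\alpha - R > 3\alpha$ --- is the right idea and does work.

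There is, however, an error in your justification that $d_N(q,V)=4\alpha$. You write that $Rad(N)\ge 10\alpha$ ``rules out shortcuts'' because ``any competing curve from $q$ to $V$ would have to exit the embedded hyperbolic ball of radius $10\alpha$ about $q$, hence has length at least $10\alpha$.'' This is false: since $d_N(q,V)\le 4\alpha < 10\alpha$, the submanifold $V$ already meets the interior of that ball, so a competing curve has no reason to leave it; the injectivity radius of $N$ is irrelevant to this step. What actually forces $d_N(q,V)=4\alpha$ is the hypothesis $Rad^\perp(V)\ge 5\alpha$ alone: if $d_N(q,V)=d<4\alpha$, a minimizing geodesic from $V$ to $q$ meets $V$ orthogonally (first variation), hence $q=\exp^\perp(w)$ for some normal vector $w$ with $|w|=d<5\alpha$; but $q=\exp^\perp(v)$ with $|v|=4\alpha<5\alpha$ as well, and injectivity of $\exp^\perp$ on $\{|v|<5\alpha\}$ forces $w=v$ and $d=4\alpha$, a contradiction. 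So your conclusion is correct, but the stated reason is not, and your closing remark that the two radius hypotheses are ``precisely what are needed'' to certify the normal distance equals the intrinsic distance overstates the role of $Rad(N)\ge 10\alpha$ --- that hypothesis is doing other work elsewhere in the construction (for instance guaranteeing a large embedded hyperbolic ball about the lifted point $p$, which the main proof in Section~\ref{sec-proof} actually uses), not the step you attribute to it.
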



The manifold $M$ that we shall use in this paper is $N_s$ with the metric $g_s$ as described in above theorem, when $N$ is one of those $\phi_n$-manifolds 
that do not have the homotopy type of a locally symmetric manifold. 
\section{Proof of theorem}\label{sec-proof}

In this section we shall prove Theorem \ref{mainthm}. 

Let $M^n$ be the branched cover manifold as desribed above at the end of section $2$, with dimension $n=4k-2$, where $k$ is some integer greater than $1$.

First, we want to construct a non-zero element in $\pi_0(\text{Diff}_0(M))$. 
Such an element is represented by a self-diffeomorphism $f$ of $M$ that is homotopic to the identity map on $M$, such that there is no path in 
$\text{Diff}_0(M)$ connecting $f$ to the identity map of $M$. Actually what we end up doing is more than this. 
We construct an $f \in \text{Diff}_0(M)$ such that there is no path in $\text{Diff}(M)$ connecting this $f$ with the identity map on $M$.
If such a path does not exist in $\text{Diff}(M)$ it cannot exist in $\text{Diff}_0(M)$ which is a subspace of $\text{Diff}(M)$.

\subsection{Construction of the diffeomorphism}\label{subsec:construct}
An exotic $n$-sphere $\Sigma^n$ is an $n$-dimensional smooth manifold that is homeomorphic to $S^n$ but not diffeomorphic to $S^n$. $\Sigma^n$
is a twisted double of two copies of $D^n$ joined by an orientation preserving diffeomorphism of the boundary of $D^n$ denoted by $\partial D^n =
S^{n-1}$. Let this orientation preserving diffeomorphism be $h_1:S^{n-1} \rightarrow S^{n-1}$. 

Let $P_1$ and $P_2$ be two distinct points in $S^{n-1}$ and let $N(P_1)$ and $N(P_2)$ be some neighborhood of $P_1$ and $P_2$ respectively in $S^{n-1}$ .
The diffeomorphism $h_1$ is smoothly isotopic to a 
diffeomorphism $h_2:S^{n-1} \rightarrow S^{n-1}$  such that $h_2$ is the identity restricted to the neighborhoods $N(P_1)$ and $N(P_2)$ and is homotopic 
to the identity map on $S^{n-1}$ relative to $N(P_1)$ and $N(P_2)$. This can be achieved with the help of techniques from Differential
Topology in Milnor's Topology from a Differentiable Viewpoint \cite{MIL-BK}.

Let $N'(P_1)$ and $N'(P_2)$ be proper subsets of $N(P_1)$ and $N(P_2)$ respectively. We shall now look at $h_2$ restricted to 
$S^{n-1} \smallsetminus (N'(P_1) \cup N'(P_2))$, and we shall identify $S^{n-1} \smallsetminus (N'(P_1) \cup N'(P_2))$ with $S^{n-2}\times [1,2]$.

By the Pseudoisotopy Theorem of Cerf, this diffeomorphism $h_2$ restricted to $S^{n-2}\times [1,2]$ is smoothly isotopic to a self-diffeomorphism 
$h:S^{n-2}\times [1,2]\rightarrow S^{n-2}\times [1,2]$ such that, $h$ is level preserving, in other words, if we fix any 
$t \in [0,1]$ then for all $x \in S^{n-2}$, $h(x,t)=(y,t)$ for some $y \in S^{n-2}$. We shall denote this $y$ as $h_t(x)$.

The reason for taking smaller neighborhoods inside the original neighborhoods of the two points $P_1$ and $P_2$ is because we want the
self-diffeomorphism $h$ of $S^{n-2}\times [1,2]$ to be the identity near $1$ and $2$, which is desirable due to technical reasons that become clear later.

If we select an exotic sphere $\Sigma$ of dimension $4k-1$, we can get a similar diffeomorphism $h:S^{4k-3}\times [1,2]\rightarrow S^{4k-3}\times [1,2]$ 
by the above process. With the help of this diffeomorphism $h$ we shall construct a diffeomorphism 
$f:M\rightarrow M$ such that $[f] \in \pi_0(\text{Diff}_0(M))$.

At this point let us recall theorem \ref{thm:injrad} and remember that our $M^{4k-2}=N_s$. Let us choose a point $p \in N_s-T_R(V)$, so that by the same theorem,
the injectivity radius of $p$ is greater than $3\alpha$. Also note that $p$ is in the ``hyperbolic region'' of $M$.

Let $B$ be a closed geodesic balls of raidius $2\alpha$ centered at $p$. We will identify $B \setminus p$ with $S^{4k-3} \times (0,2\alpha]$, where the
lines $t \mapsto (x,t)$ are the unit speed geodesics emanating from $p$. 

We will now define $f \in \text{Diff}(M)$ as follows: \\\\
\centerline {$\displaystyle f(q)  = \begin{cases} 
      q & \text{if } q \notin (S^{4k-3}\times[\alpha,2\alpha])\subset M \\
      (h_{t/\alpha}(x),t) & \text{if } q=(x,t) \in (S^{4k-3}\times[\alpha,2\alpha]) \subset M \\
     \end{cases}$}\\\\
where $h(x,s)=(h_s(x),s)$.\newline

The diffeomorphism $h$ is homotopic to the identity map on $S^{4k-3}\times [1,2]$ relative to the boundary, because
$h_2$ is homotopic to the identity map of $S^{4k-2}$ relative to the neighborhoods $N(P_1)$ and $N(P_2)$. Therefore $f$ constructed as above is
homotopic to the identity on $M$. Hence $f \in \text{Diff}_0(M)$. 

\subsection{Showing $f$ is not smoothly isotopic to the identity} We will now show that there is no path in Diff$(M)$ connecting $f$ as above and the identity map 
on $M$. In other words we shall show that $f$ is not smoothly isotopic to the identity on $M$. The definition of ``smooth isotopy'' that we use is 
given below and $I$ denotes the closed interval $[0,1]$ in this definition and for the rest of the paper:\newline

\begin{defn}
 A diffeomorphism $f:M\rightarrow M$ is said to be {\bf{smoothly isotopic}} to another diffeomorphism $g:M\rightarrow M$ if there exists a smooth
 map $F:M\times I \rightarrow M$ such that $F(m,0)=f(m)$ and $F(m,1)=g(m)$ for all $m\in M$, and for each $t_0 \in I$, $h_{t_0}:M\rightarrow M$ is a diffeomorphism,
 where $h_{t_0}(m) := F(m,t_0)$ for all $m \in M$.
\end{defn}

Another definition we shall use in this paper:

\begin{defn}
 Given a manifold $W$ with boundary $\partial_1 W \sqcup \partial_2 W$ and a diffeomorphism $F:\partial_1 W \rightarrow \partial_2 W$, we obtain a manifold
 without boundary, $W_F := W/x\sim F(x)$ where $x \in \partial_1 W$.
\end{defn}
Note:\begin{enumerate}
      \item If we have $W$ as above and a homeomorphism $F:\partial_1 W \rightarrow \partial_2 W$ then $W_F := W/x\sim F(x)$ where $x \in \partial_1 W$,
      is a topological manifold without boundary.
      \item If we have a homeomorphism or a diffeomorphism $f:N\rightarrow N$ then $(N\times I)_f$ is just the usual mapping torus.
     \end{enumerate}

\vspace{5mm}
Two lemmas that we shall use are as follows:
\begin{lemma}\label{lem:lem1}
 If $\alpha$ and $\beta$ are two homotopic self-homeomorphisms of a non positively curved close manifold $N$ then $\alpha$ and $\beta$ are topologically 
 pseudo-isotopic provided the dimension of $N$ is greater or equal to $4$.
\end{lemma}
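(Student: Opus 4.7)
The plan is to derive the topological pseudo-isotopy from the Borel-type topological rigidity of aspherical manifolds established by Farrell and Jones.

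First, I would reformulate the problem via mapping tori. Form $T_\alpha := (N \times I)_\alpha$ and $T_\beta := (N \times I)_\beta$; each is a closed topological manifold fibering over $S^1$ with fiber $N$. A choice of homotopy $H \colon N \times I \to N$ with $H_0 = \alpha$ and $H_1 = \beta$ induces in the standard way a fiber-preserving homotopy equivalence $\phi \colon T_\alpha \to T_\beta$ covering the identity on $S^1$.

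Second, I would apply topological rigidity. Since $N$ is non-positively curved it is aspherical, so $T_\alpha$ and $T_\beta$ are closed aspherical manifolds with fundamental groups of the form $\pi_1(N) \rtimes_{\alpha_\ast} \mathbb{Z}$ and $\pi_1(N) \rtimes_{\beta_\ast} \mathbb{Z}$. These groups lie in the class for which Farrell--Jones (and later extensions by Bartels--L\"uck--Reich) have established the $K$- and $L$-theoretic isomorphism conjectures, whence the topological Borel conjecture. Because $\dim T_\alpha = \dim N + 1 \ge 5$, the homotopy equivalence $\phi$ is homotopic to a homeomorphism $\Phi \colon T_\alpha \to T_\beta$.

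Third, I would promote $\Phi$ to a fiber-preserving homeomorphism over $S^1$ within its homotopy class, invoking Farrell's topological fibering theorem and the uniqueness part of topological rigidity applied to the two bundle structures over $S^1$. Once $\Phi$ is fiber-preserving, lifting it to the infinite cyclic covers $\widetilde T_\alpha \cong N \times \mathbb{R}$ and $\widetilde T_\beta \cong N \times \mathbb{R}$ gives a homeomorphism intertwining the deck transformations $(x,t) \mapsto (\alpha(x), t+1)$ and $(x,t) \mapsto (\beta(x), t+1)$; restricting the lift to a fundamental domain $N \times [0,1]$ yields a homeomorphism $N \times I \to N \times I$ restricting to $\alpha$ at one end and to $\beta$ at the other, which is the desired topological pseudo-isotopy.

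The hardest step is the third: the Borel conjecture alone produces a homeomorphism of the total spaces, but not automatically one compatible with the $S^1$-fiber structure. Extracting a fiber-preserving representative (equivalently, a pseudo-isotopy of $\alpha$ with $\beta$ rather than merely a homeomorphism $T_\alpha \cong T_\beta$) is where the uniqueness portion of the Farrell--Jones machinery for semidirect-product groups $\pi_1(N) \rtimes \mathbb{Z}$ is essential, and this is what forces the dimensional hypothesis $\dim N \ge 4$.
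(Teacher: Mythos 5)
The paper does not actually prove Lemma~\ref{lem:lem1}; it simply cites Farrell--Jones \cite{FJ1}, where this pseudo-isotopy statement is a consequence of their topological rigidity theorem for compact non-positively curved manifolds (with boundary). Your proposal invokes the right machinery, but the route you take through mapping tori is both more roundabout than necessary and genuinely incomplete at its crucial step.

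The gap is exactly where you flag it. Producing a homeomorphism $\Phi \colon T_\alpha \to T_\beta$ via the Borel conjecture is the \emph{easy} direction here; the paper's Lemma~\ref{lem:lem2} records the converse implication (pseudo-isotopy implies the mapping tori are homeomorphic) precisely because that direction is elementary, while the one you need is not. Knowing $T_\alpha \cong T_\beta$, even by a homeomorphism in the homotopy class of your fiber-preserving $\phi$, does not hand you a pseudo-isotopy. Your step~3 asks to upgrade $\Phi$ to a fiber-preserving homeomorphism covering the identity on $S^1$, but Farrell's fibering theorem concerns when a map to $S^1$ is homotopic to a fibration, not when a given homeomorphism between fibered manifolds can be deformed to a fiber-preserving one; and even granting a fiber-preserving $\Phi$, the lift $\widetilde\Phi$ of $N \times \mathbb{R}$ intertwining the two deck actions need not be level-preserving, so restricting it to $N\times[0,1]$ does not give a self-homeomorphism of $N \times I$ with the required boundary behavior without substantial extra work. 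Waving at ``the uniqueness part of topological rigidity'' does not close this.

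The clean derivation applies topological rigidity directly to $N \times I$ relative to its boundary, with no detour through mapping tori. Let $H \colon N \times I \to N$ be a homotopy with $H_0 = \alpha$ and $H_1 = \beta$, and set $G(x,t) = (H_t(x), t)$. Then $G \colon (N\times I, \partial) \to (N\times I, \partial)$ is a homotopy equivalence of pairs whose restriction to the boundary is the homeomorphism $\alpha \sqcup \beta$. Since $N$ is closed and non-positively curved (hence aspherical with fundamental group in the Farrell--Jones class) and $\dim(N\times I) = n+1 \ge 5$, the rel-boundary topological rigidity theorem of \cite{FJ1} (triviality of $\mathcal{S}^{\mathrm{TOP}}(N\times D^k,\partial)$) says $G$ is homotopic rel $\partial$ to a homeomorphism $F \colon N\times I \to N\times I$. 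That $F$ is exactly the topological pseudo-isotopy from $\alpha$ to $\beta$, and the hypothesis $\dim N \ge 4$ is precisely what makes the ambient dimension $\ge 5$. This is the intended content of the citation to \cite{FJ1}, and it avoids the fibering subtleties entirely.
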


\begin{lemma}\label{lem:lem2}
 If $\alpha$ and $\beta$ are two topologically pseudo isotopic homeomorphisms of a closed manifold $N$ then $(N\times I)_\alpha$ is homeomorphic to 
 $(N\times I)_\beta$.
\end{lemma}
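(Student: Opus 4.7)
The plan is to use the pseudo-isotopy itself as a self-homeomorphism of $N\times I$ that descends to a homeomorphism between the two mapping tori. Recall that topological pseudo-isotopy between $\alpha$ and $\beta$ means there is a homeomorphism $H:N\times I\to N\times I$ whose restriction to $N\times\{0\}$ is $\alpha$ (under the identification $N\times\{0\}\cong N$) and whose restriction to $N\times\{1\}$ is $\beta$; equivalently, $H(x,0)=(\alpha(x),0)$ and $H(x,1)=(\beta(x),1)$ for all $x\in N$.

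I would then define $\tilde H:(N\times I)_\alpha\to(N\times I)_\beta$ by $\tilde H([x,t]):=[H(x,t)]$, and verify that this descends to the quotients. The only nontrivial identification in $(N\times I)_\alpha$ is $(x,0)\sim(\alpha(x),1)$. Applying $H$ sends these two representatives to $(\alpha(x),0)$ and $(\beta\alpha(x),1)$, which are identified in $(N\times I)_\beta$ via the defining relation $(y,0)\sim(\beta(y),1)$ with $y=\alpha(x)$. Hence $\tilde H$ is well defined; continuity is automatic since $H$ is continuous and the quotient maps are open. Applying the same construction to $H^{-1}$, whose boundary restrictions are $\alpha^{-1}$ and $\beta^{-1}$, produces a well defined continuous two-sided inverse of $\tilde H$, so $\tilde H$ is a homeomorphism.

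The main obstacle is really just bookkeeping: one has to line up conventions so that the boundary restrictions of the pseudo-isotopy realize $\alpha$ and $\beta$ in the form compatible with the two gluings used to form $(N\times I)_\alpha$ and $(N\times I)_\beta$. Once the definitions are aligned, the lemma reduces to the single equivalence-relation check above, with no further geometric input; in particular, the fact that $H$ need not be level-preserving plays no role because we only use the action of $H$ on the two boundary components when verifying compatibility with the identifications.
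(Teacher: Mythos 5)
Your argument is correct, and since the paper omits the proof (merely remarking that the lemma ``is not difficult to prove''), it is the natural argument the author has in mind: the pseudo-isotopy $H$ itself induces the homeomorphism $\tilde H([x,t]) = [H(x,t)]$ between the two mapping tori, with well-definedness coming from the single check $H(x,0) = (\alpha(x),0) \sim (\beta\alpha(x),1) = H(\alpha(x),1)$ in $(N\times I)_\beta$. One small remark: continuity of $\tilde H$ is best justified by the universal property of quotient maps (the composite $q_\beta \circ H$ is continuous, hence $\tilde H$ is), rather than openness of the quotient maps, but this is a phrasing point and not a gap.
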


The second lemma is not difficult to prove. The first lemma is a result by Farrell and Jones \cite{FJ1}.

In all that follows now $\Sigma$ stands for the exotic sphere of dimension $4k-1$ used in subsection \ref{subsec:construct}, and $id_N$ will denote
the identity map on a manifold $N$.\newline

\begin{prop}
There is a diffeomorphism $\mathcal{F}_1:(M\times S^1)\# \exo\rightarrow(M\times I)_f$.
\end{prop}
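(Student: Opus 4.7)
The proposition is purely topological, so the negative-curvature hypothesis on $M$ plays no role here. The plan is a local cut-and-paste argument: since $f$ is the identity on $M$ outside the annulus $S^{4k-3}\times[\alpha,2\alpha] \subset B$, the ``twist'' of the mapping torus $(M\times I)_f$ is concentrated inside the ball $B$, and the connected sum $(M\times S^1)\,\#\,\exo$ can be localized in the same region.

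First I would decompose both sides. Because $f|_{M\setminus \mathring{B}} = \tx{id}$,
\bgd
(M \times I)_f \;=\; \bigl[(M \setminus \mathring{B}) \times S^1\bigr] \;\cup_{S^{4k-3}\times S^1}\; (B \times I)_{f|_B},
\edd
while performing the connected sum inside $B\times S^1 \subset M\times S^1$ gives
\bgd
(M \times S^1)\,\#\,\exo \;=\; \bigl[(M \setminus \mathring{B}) \times S^1\bigr] \;\cup_{S^{4k-3}\times S^1}\; \bigl[(B \times S^1)\,\#\,\exo\bigr].
\edd
The outside pieces match on the nose, so producing $\mathcal{F}_1$ reduces to a local diffeomorphism
\bgd
(B \times I)_{f|_B} \;\cong\; (B \times S^1)\,\#\,\exo \qquad (\tx{rel } S^{4k-3}\times S^1).
\edd

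Second I would identify the local data. Under $B \cong D^{4k-2}$, the restriction $f|_B$ is a diffeomorphism fixing a neighborhood of $\partial B$ pointwise, since the level-preserving $h$ is the identity near both $S^{4k-3}\times\{\alpha\}$ and $S^{4k-3}\times\{2\alpha\}$, so $f|_B \in \tx{Diff}(D^{4k-2},\partial)$. The Gromoll--Meyer correspondence $\pi_0\tx{Diff}(D^{4k-2},\partial) \to \Theta_{4k-1}$ assigns to $f|_B$ the twisted double $D^{4k-1}\cup_{\widehat{f|_B}} D^{4k-1}$, where $\widehat{f|_B}$ extends $f|_B$ by the identity on the opposite hemisphere of $S^{4k-2}$. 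By the construction of $f$ from the level-preserved form of $h_2$ (with $h_2\simeq h_1$ and $\exo = D^{4k-1}\cup_{h_1} D^{4k-1}$), the extension $\widehat{f|_B}$ is smoothly isotopic to $h_2$ on $S^{4k-2}$, so the exotic sphere associated to $f|_B$ is precisely $\exo$.

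Finally I would prove the local diffeomorphism $(D^n \times I)_\varphi \cong (D^n \times S^1)\,\#\,\Sigma(\varphi)$ rel boundary for any $\varphi\in\tx{Diff}(D^n,\partial)$. Viewing $D^n \times I$ as a disk $D^{n+1}$, the mapping-torus gluing identifies its two hemispherical boundary disks $D^n\times\{0\}$ and $D^n\times\{1\}$ via $\hat\varphi$, which is precisely the twisted-double description of $\Sigma(\varphi)$, while the portion $\partial D^n\times I$ closes up to $\partial D^n\times S^1$ exactly as it would for $D^n \times S^1$. Excising a small open ball and reinterpreting the result as a connected sum yields the claim. The main obstacle I anticipate is producing a genuine diffeomorphism rather than only a homeomorphism: the cut-and-paste is manifestly topological, but making the identifications smooth across the gluing loci requires careful collar matching and corner smoothing on the annular region $S^{4k-3}\times[\alpha,2\alpha]$. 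Once this local smoothing is handled, $\mathcal{F}_1$ is assembled by pasting along $S^{4k-3}\times S^1$.
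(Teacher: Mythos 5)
Your cut-and-paste argument is correct and, as far as I can tell, is the argument the author had in mind: the paper's entire ``proof'' of this proposition is the single sentence ``This is true because of the way we construct $f$,'' so what you have done is fill in the details that the paper leaves completely implicit. The localization to $B$, the reduction to $f|_B\in\mathrm{Diff}(D^{4k-2},\partial)$, the use of the twisted-disk homomorphism $\pi_0\,\mathrm{Diff}(D^{4k-2},\partial)\to\Theta_{4k-1}$, and the identification of $\widehat{f|_B}$ with $h_2$ (hence with $h_1$, whose twisted double is $\Sigma$) are all the right ingredients, and they match the way $f$ was assembled in subsection 3.1.

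Two small points to tighten. First, in the last paragraph you write that the mapping-torus gluing ``identifies its two hemispherical boundary disks $D^n\times\{0\}$ and $D^n\times\{1\}$ via $\hat\varphi$''; this should be $\varphi$ (a self-map of $D^n$), not its radial extension $\hat\varphi$ to $S^n$ --- the role of $\hat\varphi$ only appears after you convert the codimension-one cut-and-reglue along $D^n\times\{\theta_0\}$ into a codimension-zero remove-and-reglue of an $(n{+}1)$-disk, at which point the gluing sphere map is $\hat\varphi$ and the connected summand is $\Sigma_{\hat\varphi}$. Second, the key local statement $(D^n\times I)_\varphi\cong(D^n\times S^1)\,\#\,\Sigma(\varphi)$ rel boundary is asserted rather than proved; it is a standard fact (cut-and-reglue along a codimension-one disk produces a connected sum with the twist sphere), but since the whole proposition rests on it you should either carry out the collar argument or give a reference. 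Neither issue affects the correctness of the overall plan.
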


This is true because of the way we construct $f$.

\begin{prop}\label{prop:pi}
 If $f$ is smoothly isotopic to $id_M$ then, there is a diffeomorphism $\mathcal{F}_2:(M\times I)_f\rightarrow M\times S^1$.
 Also, the induced map ${\mathcal{F}_2}_*:\pi_1((M\times I)_f)\rightarrow \pi_1(M\times S^1)$ restricted to 
 $\pi_1(M) \subset \pi_1((M\times I)_f)$ is the identity map onto $\pi_1(M) \subset \pi_1(M\times S^1)$.
\end{prop}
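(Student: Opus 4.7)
\textbf{Proof proposal for Proposition \ref{prop:pi}.}

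The plan is to use the smooth isotopy from $id_M$ to $f$ to manufacture a level-preserving diffeomorphism of $M \times I$ that descends to the desired diffeomorphism of mapping tori, and then to compute the induced map on $\pi_1$ by reading it off on a single fiber.

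First I would invoke the hypothesis to obtain a smooth map $H:M\times I\to M$ with $H(m,0)=m$, $H(m,1)=f(m)$, and $H_t:=H(\cdot,t):M\to M$ a diffeomorphism for every $t\in I$. Define $\Phi:M\times I\to M\times I$ by $\Phi(m,t)=(H_t(m),t)$. Because each $H_t$ is a diffeomorphism, $\Phi$ is a level-preserving diffeomorphism of $M\times I$ with smooth inverse $(n,t)\mapsto(H_t^{-1}(n),t)$.

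Next I would verify that $\Phi$ descends to the quotients. One has $\Phi(m,0)=(m,0)$ and $\Phi(m,1)=(f(m),1)$. In $M\times S^1=(M\times I)_{id_M}$ the points $(m,0)$ and $(m,1)$ are identified, while in $(M\times I)_f$ the points $(m,0)$ and $(f(m),1)$ are identified. Therefore $\Phi$ respects the equivalence relations and descends to a diffeomorphism $\Psi:M\times S^1\to(M\times I)_f$; the desired $\mathcal{F}_2$ is the inverse of $\Psi$.

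For the $\pi_1$ statement, view both mapping tori as fibered over $S^1$ via the second coordinate, and interpret the subgroups $\pi_1(M)\subset\pi_1(M\times S^1)$ and $\pi_1(M)\subset\pi_1((M\times I)_f)$ as the images of the fiber inclusions at $t=0$. Choose a basepoint $m_0$ in the fiber over $0$. Since $H_0=id_M$, the diffeomorphism $\Psi$ restricted to $M\times\{0\}\subset M\times S^1$ is literally $id_M$ into $M\times\{0\}\subset(M\times I)_f$, so $\Psi_*$ carries $\pi_1(M)$ identically onto $\pi_1(M)$, and hence so does $(\mathcal{F}_2)_*$ in the reverse direction.

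The only delicate point is the basepoint bookkeeping in the last step, but anchoring both basepoints in the fiber at $t=0$ (where $\Psi$ is the literal identity on $M$) makes the verification immediate; this is why the isotopy is parametrized so that $H_0=id_M$ rather than $H_1=id_M$. Beyond this, the argument is a straightforward construction, so no genuine obstacle appears once the isotopy is in hand.
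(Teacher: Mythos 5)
Your direct construction is correct in outline, and in fact it is \emph{cleaner} and more to the point than the paper's own justification. The paper merely says the proposition ``can be deduced using the Lemmas 3.3 and 3.4,'' i.e.\ the Farrell--Jones pseudo-isotopy rigidity for homotopic homeomorphisms of nonpositively curved manifolds, plus the observation that topologically pseudo-isotopic homeomorphisms give homeomorphic mapping tori. But those lemmas only produce a \emph{homeomorphism} $(M\times I)_f\to M\times S^1$, and they require extra work to extract the statement about $\pi_1(M)$; they do not deliver the diffeomorphism claimed in the proposition. Your approach exploits the actual hypothesis (a \emph{smooth} isotopy $H_t$ from $id_M$ to $f$) and builds the level-preserving diffeomorphism $\Phi(m,t)=(H_t(m),t)$ of $M\times I$, which descends to a diffeomorphism $\Psi$ of mapping tori; the $\pi_1$ tracking then falls out immediately because $\Psi$ is literally the identity on the $t=0$ fiber. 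So the two arguments genuinely differ: the paper's route is topological and uses deep rigidity theorems, while yours is elementary smooth topology using the stronger smooth-isotopy hypothesis directly.

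One small technical gap to fill: for the descended map $\Psi:M\times S^1\to (M\times I)_f$ to be \emph{smooth} (not just a homeomorphism), you must ensure that $\Phi$ is compatible with the smooth structures across the gluing seam. In the local chart straddling $t=0$, $\Psi$ is given by $(m,t)\mapsto(H_t(m),t)$ for small $t\ge 0$ and by $(m,t)\mapsto\bigl(f^{-1}(H_{1+t}(m)),t\bigr)$ for small $t<0$, and while these agree at $t=0$ (both reduce to $id_M$), their $t$-derivatives need not match for an arbitrary isotopy. The standard fix is to re-parametrize $H$ so that $H_t\equiv id_M$ near $t=0$ and $H_t\equiv f$ near $t=1$ (precompose with a smooth cutoff $\lambda:I\to I$); then $\Phi$ is constant in $t$ near both ends of $I$ and $\Psi$ is manifestly smooth. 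With that minor adjustment stated explicitly, your argument is complete and correct.
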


This can be deduced using the Lemmas 3.3 and 3.4.

Composition of the two diffeomorphisms in the above two propositions yield a diffeomorphism $\mathcal{F}=\mathcal{F}_2\circ\mathcal{F}_1$: 
\begin{displaymath}
 \mathcal{F}:(M\times S^1)\# \exo\rightarrow (M\times S^1)
\end{displaymath}

Let $T$ denote $M\times S^1$ from now onwards.

Assuming this diffeomorphism $\mathcal{F}:T\# \exo\rightarrow T$ exists we want to arrive at a contradiction. 

Let $\exo$ we selected bound $W$, a $4k$-dimensional parallelizable manifold. In Kervaire and Milnor's paper \cite{MIL-KER}
we learn that such exotic spheres that bound parallelizable manifolds do exist in the dimensions that we are working with. We shall use information
on these special exotic spheres as and when we need from \cite{MIL-KER}.

Let us now denote the boundary connect sum of $T \times [0,1]$ and $W$ by $T\times [0,1] \hspace{1mm} \#_b  \hspace{1mm} W$.
The boundary of this manifold $T\times [0,1] \hspace{1mm} \#_b  \hspace{1mm} W$ is the disjoint union of $T\hspace{1mm}\#\hspace{1mm}\exo$ and $T$. 

We now build a new closed, smooth, orientable manifold $\mathcal{M}$ of dimension $4k$ as described below:
\begin{defn} $\mathcal{M}:=(T \times [0,1] \hspace{1mm} \#_b  \hspace{1mm} W)_\mathcal{F}$.
\end{defn}

Let $D$ be a $4k$ dimensional disk. The boundary of $D$ is $S^{4k-1}$. The boundary of $W$ is $\exo$ which is homeomorphic to $S^{4k-1}$. 
By gluing the boundaries of $W$ and $D$ via this homeomorphism we obtain a compact topological manifold without boundary, which we will 
denote as $W\cup D$.

\begin{prop}\label{prop1}
$\mathcal{M}$ is homeomorphic to $(T\times S^1)\hspace{1mm}\#\hspace{1mm}(W \cup D)$, where $D$ is a $4k$-dimensional disk.
\end{prop}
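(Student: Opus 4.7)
The plan is to decompose $\mathcal{M}$ as $\mathcal{M}' \hspace{1mm}\#\hspace{1mm} (W \cup D)$ for an auxiliary closed topological manifold $\mathcal{M}'$, and then identify $\mathcal{M}'$ with $T \times S^1$. Set $\mathcal{M}' := (\mathcal{M} \setminus \mathring{W}) \cup_{\Sigma} D$, using that $\Sigma$ is homeomorphic to $S^{4k-1} = \partial D$. Since removing the added disk $D$ from $\mathcal{M}'$ and gluing $W$ back along $\Sigma$ recovers $\mathcal{M}$, while the analogous operation on the $W \cup D$ side of a connect sum recovers $W$, we get directly $\mathcal{M} \cong \mathcal{M}' \hspace{1mm}\#\hspace{1mm} (W \cup D)$. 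So it suffices to show $\mathcal{M}' \cong T \times S^1$.

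Next I would identify $\mathcal{M}'$ with a topological mapping torus. Substituting $D$ for $W$ throughout the construction of $\mathcal{M}$ yields $\mathcal{M}' \cong (T \times [0,1] \hspace{1mm}\#_b\hspace{1mm} D)_{\mathcal{F}_0}$, where $\mathcal{F}_0 : T \to T$ is the self-homeomorphism induced from $\mathcal{F} : T \hspace{1mm}\#\hspace{1mm} \Sigma \to T$ after capping off $\Sigma$ with $D$ (which we can do since $\Sigma \cong_\mathrm{top} S^{4k-1}$). Because boundary-connect-summing with a disk is a trivial operation, $T \times [0,1] \hspace{1mm}\#_b\hspace{1mm} D \cong T \times [0,1]$, and hence $\mathcal{M}' \cong (T \times [0,1])_{\mathcal{F}_0}$.

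To conclude $\mathcal{M}' \cong T \times S^1$, it suffices by Lemma \ref{lem:lem2} to show $\mathcal{F}_0$ is topologically pseudo-isotopic to $\mathrm{id}_T$. Since $M$ is negatively curved, $T = M \times S^1$ is aspherical and non-positively curved, so Lemma \ref{lem:lem1} reduces the question to showing $\mathcal{F}_0$ is merely homotopic to $\mathrm{id}_T$, which by asphericity amounts to $(\mathcal{F}_0)_*$ being the identity on $\pi_1(T) = \pi_1(M) \times \mathbb{Z}$. The $\pi_1(M)$-factor is controlled by Proposition \ref{prop:pi}, and the $\mathbb{Z}$-factor follows by tracking the construction of $\mathcal{F}_1$ through the exotic sphere $\Sigma$. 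The main obstacle is this last $\pi_1$ bookkeeping, since the stated $\pi_1$-information in Proposition \ref{prop:pi} only explicitly covers the $\pi_1(M)$ subgroup and not the full $\pi_1(T)$.
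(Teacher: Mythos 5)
Your proof is correct and rests on the same two lemmas and the same implicit use of the canonical homeomorphism $T\#\Sigma\cong T$ as the paper, but the order of operations is genuinely different. The paper keeps the boundary-connected piece $T\times[0,1]\#_b W$ intact, identifies it as a manifold-with-boundary with $\overline W:=(T\times[0,1])\#(W\cup D)$, and then applies Lemma~\ref{lem:lem2} in a mildly generalized form (to $\overline W$, which has two boundary components diffeomorphic to $T$ but is not itself of the form $N\times I$) to replace the gluing map $\mathcal F$ by the identity, getting $(\overline W)_{id}=(T\times S^1)\#(W\cup D)$ in one step. You instead cut $W$ out of the closed manifold $\mathcal M$ first, obtaining $\mathcal M\cong\mathcal M'\#(W\cup D)$, and only then apply Lemma~\ref{lem:lem2}, now to the genuine mapping torus $\mathcal M'\cong(T\times[0,1])_{\mathcal F_0}$. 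Your ordering has the small advantage that Lemma~\ref{lem:lem2} is invoked literally as stated rather than in a generalized form, at the cost of an extra excise-and-recognize step whose justification uses the topological annulus theorem in dimension $4k\ge 8$.

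The $\pi_1$ issue you flag at the end is a real loose thread, but it is present in the paper's proof too: Proposition~\ref{prop:pi} records only that $(\mathcal F_2)_*$ restricts to the identity on $\pi_1(M)$, while applying Lemma~\ref{lem:lem1} (via asphericity of $T$) needs $(\mathcal F_0)_*=\mathrm{id}$ on all of $\pi_1(T)=\pi_1(M)\times\mathbb Z$. This is not a gap in your approach specifically and can be closed the same way either route needs: $\pi_1(M)$ is centerless (negative curvature), so any automorphism of $\pi_1(M)\times\mathbb Z$ fixing $\pi_1(M)$ pointwise sends the $\mathbb Z$-generator to $\pm1$, and tracking orientations through the constructions of $\mathcal F_1$ (supported in a ball) and $\mathcal F_2$ (built from a smooth isotopy of $f$ to $\mathrm{id}_M$) rules out $-1$.
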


\begin{proof}
 Let us take the connect sum of $T\times[0,1]$ by removing a point from its interior with the topological manifold $W\cup D$ and denote
 the resulting manifold with boundary by $\overline{W}$. We write:
 \begin{displaymath}
  \overline{W}=(T\times [0,1])\hspace{1mm}\#\hspace{1mm}(W \cup D)
 \end{displaymath}
 Note that $\overline{W}$ has two boundary components, both being the manifold $T$ and that $\overline{W}$ is homeomorphic to  
 $(T\times [0,1])\hspace{1mm}\#_b\hspace{1mm}W$. Note that there is a canonical homeomorphism between $T\hspace{1mm}\#\hspace{1mm}\exo$ and $T$.
 
 By Lemma \ref{lem:lem1} and the condition on homotopy in Proposition \ref{prop:pi} we obtain that $\mathcal{F}$ is topologically 
 pseudo-isotopic to the identity map.
 
 Now by Lemma \ref{lem:lem2} and the fact that $\overline{W}$ is homeomorphic to  
 $(T\times [0,1])\hspace{1mm}\#_b\hspace{1mm}W$, we can conclude that $\mathcal{M}:=(T \times [0,1] \hspace{1mm} \#_b  \hspace{1mm} W)_\mathcal{F}$
 is homeomorphic to $(\overline{W})_{id}$. 
 
 Let us note that:
\begin{displaymath}
 (\overline{W})_{id} = (T\times S^1)\hspace{1mm}\#\hspace{1mm}(W \cup D)
\end{displaymath}
 where $id:T\rightarrow T$ is the identity map. \\
 
Therefore we have proved that $\mathcal{M}$ is homeomorphic to $(T\times S^1)\hspace{1mm}\#\hspace{1mm}(W \cup D)$.
 
\end{proof}

For any manifold $N$ let $\sigma(N)$ denote the signature of the manifold.

By the proposition we proved above, for our $4k$-dimensional manifold $\cM$,
\begin{displaymath}
 \sigma(\cM)=\sigma((T\times S^1)\hspace{1mm}\#\hspace{1mm}(W \cup D))
\end{displaymath}
since the signatures of two homeomorphic manifolds are equal. 
By the properties of the signature we get, 
\begin{displaymath}
 \sigma((T\times S^1)\hspace{1mm}\#\hspace{1mm}(W \cup D))=\sigma(T\times S^1)+\sigma(W \cup D)
\end{displaymath}
\begin{displaymath}
 =\sigma(T)\sigma(S^1)+\sigma(W \cup D)
\end{displaymath}
\begin{displaymath}
 =\sigma(W\cup D)
\end{displaymath}

Also by definition:
\begin{displaymath}
 \sigma(W\cup D)=\sigma(W)
\end{displaymath}

Therefore
\begin{displaymath}
 \sigma(\cM)=\sigma(W)
\end{displaymath}

On the other hand by the Hirzebruch index theorem,
\begin{displaymath}
 \sigma(\cM)=\cL[\cM]= \langle \cL_k(p_1,\dots,p_2),\mu_{4k}\rangle
\end{displaymath}
where $\cL[\cM]$ is the $\cL$-genus of $\cM$.
$\cL_k$ is a polynomial in the first $k$ Pontryagin classes of $\cM$ with rational coeffcients.

The $i$-th Pontryagin class of the manifold $\cM$ is an element in $H^{4i}(\cM,\mZ)$. Using the fact that $\mathcal{M}$ is homeomorphic to 
$(T\times S^1)\hspace{1mm}\#\hspace{1mm}(W \cup D)$, and the Mayer Vietoris sequence in cohomology we get the exact sequence:
\bgd
 \cdots \lra H^{i-1}(S^{4k-1}\times I)\lra H^i(\cM) \lra H^i(T\times S^1 \hspace{1mm} \smallsetminus \hspace{1mm} p_1)\oplus H^i(W\cup D \hspace{1mm} 
 \smallsetminus \hspace{1mm} p_2)  \lra H^i(S^{4k-1}\times I) \lra \cdots .
\edd

where $p_1$ is a point in $T\times S^1$ and $p_2$ is a point in $W\cup D$. 
Since,
\begin{displaymath}
 H^{i-1}(S^{4k-1}\times I)=H^i(S^{4k-1}\times I)=0\,\,\,\text{for}\,\,\, 2<i<4k-1
\end{displaymath}
we have,
\begin{displaymath}
 H^i(\cM)\cong H^i(T\times S^1 \smallsetminus p_1)\oplus H^i(W\cup D \smallsetminus p_2)\,\,\,\text{for}\,\,\, 2<i<4k-1
\end{displaymath}
It is not difficult to see that 
\begin{displaymath}
 H^i(T\times S^1 \smallsetminus p_1)=H^i(T\times S^1)\hspace{2mm}  \text{and}\hspace{2mm} H^i(W\cup D\smallsetminus p_2)=H^1(W\cup D) 
 \hspace{2mm}\text{for}\hspace{2mm} 2<i<4k-1
\end{displaymath}
Therefore, we have
\begin{displaymath}
 H^i(\cM)\cong H^i(T\times S^1)\oplus H^i(W\cup D)\,\,\,\text{for}\,\,\, 2<i<4k-1
\end{displaymath}

Therefore by Novikov's Theorem for rational Pontryagin classes, 
\begin{displaymath}
 p_j(\cM)\mapsto (p_j(T\times S^1),p_j(W\cup D))\,\,\,\text{for}\,\,\, 1\le j \le k-1
\end{displaymath}
Since $W$ is parallelizable, $P(W)$ the total Pontryagin class of $W$, is trivial. Also in chapter 4 of his thesis \cite{ARD} Ardanza proves that 
$P(M)$ the total Pontryagin class of $M$ (where $M$ is the Gromov Thurston manifold we are using) is trivial. Let us recall that $T=M\times S^1$, therefore,
$P(T\times S^1)=P(M)P(S^1\times S^1)$ is also trivial.

Hence,
\begin{displaymath}
 p_j(\cM)=0\,\,\, \text{for all}\,\,\, 1\le j \le k-1
\end{displaymath}

So the only surviving Pontryagin class in the $\cL_k$-polynomial of $\cM$ is $p_k$. The coeffcient of $s_k$ of $p_k$ in $\cL_k$ is given in page 12 of \cite{HIR} by:
\begin{displaymath}
 s_k=\displaystyle{\frac{2^{2k}(2^{2k-1}-1)}{(2k)!}B_k}
\end{displaymath}
where $B_k$ is the $k$-th Bernoulli number.

Summarizing the above work,
\begin{equation}\label{1}
 \sigma(\cM)=\sigma(W)=\displaystyle{\displaystyle{\frac{2^{2k}(2^{2k-1}-1)}{(2k)!}}B_k \langle p_k,\mu_{4k}\rangle}
\end{equation}

Let us now state a few things about $W$ and $\sigma(W)$, which are stated and proved in \cite{MIL-KER}.

Let $\Theta^n$ be the abelian group of oriented diffeomorphism class of $n$-dimensional homotopy spheres with the connected sum operation. Then, the oriented
diffeomorphism classes of $n$-dimensional homotopy spheres that bound a parallelizable manifold form a subgroup of $\Theta^n$ and will be denoted by 
$\Theta^n(\partial\pi)$.

The following theorems from \cite{MIL-KER} shall be used shortly:
\begin{thm}
 The group $\Theta^n(\partial\pi)$ is a finite cyclic group. Moreover if $n=4k-1$, then 
 \begin{displaymath}
  \Theta^n(\partial\pi)=\mZ_{t_k} 
 \end{displaymath}
where $t_k=a_k2^{2k-2}(2^{2k-1}-1)num\displaystyle{\left(\frac{B_k}{4k}\right)}$ and $a_k=1$ or $2$ if $k$ is even or odd.
\end{thm}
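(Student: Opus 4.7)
The plan is to follow the classical Kervaire--Milnor surgery-theoretic strategy. First I would check that $\Theta^n(\partial\pi)$ is a genuine subgroup of $\Theta^n$: boundary connected sum of parallelizable manifolds is again parallelizable, so if $\exo_1$ bounds $W_1$ and $\exo_2$ bounds $W_2$ then $\exo_1 \# \exo_2$ bounds $W_1 \#_b W_2$, while orientation reversal handles inverses. Finiteness of $\Theta^n$ (established earlier via the Pontryagin--Thom construction and the finiteness of stable homotopy groups of spheres) then makes $\Theta^n(\partial\pi)$ finite abelian, and cyclicity when $n=4k-1$ will fall out of the integer-valued invariant constructed next.

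For $n=4k-1$ the heart of the matter is the signature invariant
\bgd
\beta : \Theta^{4k-1}(\partial\pi) \lra \mZ, \qquad [\exo] \longmapsto \sigma(W)/8,
\edd
where $W^{4k}$ is any parallelizable manifold with $\partial W = \exo$. To see this is well defined and integer-valued, given two fillings $W_1,W_2$ the closed manifold $X := W_1 \cup_{\exo} (-W_2)$ is almost parallelizable (stably framed off a point), so all its rational Pontryagin classes except possibly $p_k$ vanish. The same Hirzebruch-signature-plus-Pontryagin-vanishing computation that appears in Section 3 above then forces $8 \mid \sigma(X)$, while unimodularity and evenness of the middle intersection form on a parallelizable $4k$-manifold with homotopy-sphere boundary yields $8 \mid \sigma(W_i)$ directly.

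Injectivity of $\beta$ is the main technical obstacle. I would argue that if $\sigma(W) = 0$, framed surgery on $W$ turns it into a disk, forcing $\exo \cong S^{4k-1}$. Concretely, one kills $\pi_i(W)$ for $i<2k$ by standard framed surgery below the middle dimension, and then uses the vanishing of the signature together with stable framing to split the middle intersection form into hyperbolic summands, each of which is killed by a further framed surgery. The simultaneous bookkeeping of framing data and the middle-dimensional surgery obstruction is delicate, and this is where Kervaire--Milnor's argument does its real work.

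Finally, the image of $\beta$ is a subgroup $t_k\mZ \subset \mZ$, and identifying $t_k$ is what produces the Bernoulli-number formula. Realization of a nontrivial element uses the $E_8$-plumbing of $2k$-disk bundles over $S^{2k}$, which is almost parallelizable with signature $8$. To pin $t_k$ down exactly I would apply Hirzebruch's theorem to a closed almost parallelizable $4k$-manifold $X$: only $p_k$ survives, so $\sigma(X) = s_k \langle p_k, \mu_{4k}\rangle$ with $s_k = 2^{2k}(2^{2k-1}-1)B_k/(2k)!$, exactly as in the signature calculation already carried out above. The denominator of $\langle p_k, \mu_{4k}\rangle$ achievable on almost parallelizable $X$ is controlled by the image of the stable $J$-homomorphism in $\pi_{4k-1}^s$ and works out to $\mathrm{num}(B_k/4k)$, while the parity factor $a_k$ records an extra factor of $2$ available when $k$ is odd. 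Assembling these pieces produces $t_k = a_k 2^{2k-2}(2^{2k-1}-1)\,\mathrm{num}(B_k/4k)$ as stated.
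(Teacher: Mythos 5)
The paper does not actually prove this statement: it is quoted verbatim from Kervaire and Milnor's \emph{Groups of Homotopy Spheres~I} and used as a black box, so there is no internal proof to compare against. Your sketch follows the standard Kervaire--Milnor route, but it contains a genuine logical error in the setup of the signature invariant. You declare $\beta : \Theta^{4k-1}(\partial\pi) \to \mZ$, $[\Sigma]\mapsto \sigma(W)/8$, and claim it is well defined, but the signature of a parallelizable filling is \emph{not} an invariant of the boundary homotopy sphere: two fillings $W_1,W_2$ of the same $\Sigma$ can have $\sigma(W_1)-\sigma(W_2)$ equal to any element of $8t_k\mZ$, the group of signatures of closed almost parallelizable $4k$-manifolds. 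Your argument only shows $8 \mid \sigma\!\left(W_1\cup_\Sigma(-W_2)\right)$, which is already automatic from $8\mid\sigma(W_i)$ and does not force the difference to vanish, so it does not establish well-definedness over $\mZ$.

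This is not a cosmetic slip, because it propagates: you then assert that the image of $\beta$ is the subgroup $t_k\mZ\subset\mZ$ and that $\beta$ is injective. Taken together these would give $\Theta^{4k-1}(\partial\pi)\cong\mZ$, contradicting the finiteness you correctly established in your opening paragraph. The repair is to compute $t_k$ \emph{first}, as $\sigma_k/8$ where $\sigma_k$ is the minimal positive signature of a closed almost parallelizable $4k$-manifold; this is exactly where Hirzebruch's theorem, the vanishing of $p_1,\dots,p_{k-1}$, and the order of the image of the $J$-homomorphism in $\pi_{4k-1}^s$ combine to produce the Bernoulli formula you quote. Only then is the invariant defined, as a homomorphism $\Theta^{4k-1}(\partial\pi)\to\mZ/t_k$, $[\Sigma]\mapsto \sigma(W)/8 \bmod t_k$: the ambiguity in the filling is precisely absorbed by reducing modulo $t_k$. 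With that target, your surgery argument gives injectivity and the $E_8$ plumbing gives surjectivity, yielding the finite cyclic group $\mZ_{t_k}$ as stated.
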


\begin{thm}\label{thm:milker}
 Let $t_k$ be the order of $\Theta^{4k-1}(\partial\pi)$. There exists a complete set of representatives 
 $\displaystyle\{S^{4k-1}=\Sigma_0,\Sigma_1, \dots, \Sigma_{t_k-1}\}$ in $\Theta^{4k-1}(\partial\pi)$ such that if $W$ is a parallelizable manifold with 
 boundary $\Sigma_i$ then $\displaystyle{\left(\frac{\sigma(W)}{8}\right)mod\, t_k=i}$.
\end{thm}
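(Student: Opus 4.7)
The plan is to construct a group homomorphism $\phi:\Theta^{4k-1}(\partial\pi)\to \mZ/t_k\mZ$ sending the class of a homotopy sphere $\Sigma$ to $\sigma(W)/8 \bmod t_k$, where $W$ is any parallelizable $4k$-manifold bounding $\Sigma$, and then take the desired representatives $\Sigma_i$ to be the preimages under $\phi$ of $i\in\{0,1,\dots,t_k-1\}$ (with $\Sigma_0 = S^{4k-1}$, since $S^{4k-1}$ bounds the disk and the disk has signature $0$). Everything reduces to showing that $\phi$ is well-defined and is an isomorphism.

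First I would check that $\sigma(W)$ is always divisible by $8$. Since $\partial W = \Sigma$ is homeomorphic to $S^{4k-1}$, coning off the boundary produces a closed topological manifold $\widehat W$ that is almost parallelizable (trivial tangent bundle away from a point). All Pontryagin numbers of $\widehat W$ other than $\langle p_k,[\widehat W]\rangle$ vanish, so Hirzebruch's signature theorem gives
\begin{displaymath}
\sigma(W)=\sigma(\widehat W)=\frac{2^{2k}(2^{2k-1}-1)}{(2k)!}\,B_k\,\langle p_k,[\widehat W]\rangle,
\end{displaymath}
and standard Bernoulli-number arithmetic then forces $8\mid \sigma(W)$. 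Next I would verify well-definedness modulo $t_k$: if $W_1, W_2$ are both parallelizable with $\partial W_i=\Sigma$, then $X:=W_1\cup_{\Sigma}(-W_2)$ is a closed almost parallelizable $4k$-manifold with $\sigma(X)=\sigma(W_1)-\sigma(W_2)$, and the sharp divisibility theorem of Kervaire--Milnor (which is essentially the content of the companion theorem defining $t_k$) gives $t_k\mid \sigma(X)/8$. That $\phi$ is a homomorphism follows because connect sum of homotopy spheres corresponds to boundary connect sum of parallelizable fillings, under which signature is additive.

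To finish, I would show $\phi$ is surjective by exhibiting, for each $i$, a parallelizable $4k$-manifold with signature exactly $8i$. One plumbs $i$ copies of the tangent disk bundle of $S^{2k}$ along the $E_8$ graph (each $E_8$-plumbing contributing signature $8$); the boundary is automatically a homotopy sphere in $\Theta^{4k-1}(\partial\pi)$ by the usual handle-theoretic Morse-theoretic check. Combined with the already-established order $|\Theta^{4k-1}(\partial\pi)|=t_k$ from the previous theorem, surjectivity forces $\phi$ to be an isomorphism, and the defining property $\sigma(W)/8\equiv i \pmod{t_k}$ of $\Sigma_i:=\phi^{-1}(i)$ is then exactly the conclusion.

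The principal obstacle is the sharp divisibility $t_k\mid \sigma(X)/8$ for closed almost-parallelizable $4k$-manifolds $X$; this is not elementary and ultimately rests on Adams' determination of the image of the $J$-homomorphism, together with the explicit formula $t_k=a_k\,2^{2k-2}(2^{2k-1}-1)\,\mathrm{num}(B_k/4k)$. Every other step (divisibility by $8$, additivity under boundary connect sum, the plumbing construction realizing signature $8$) is by now standard.
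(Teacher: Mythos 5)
The paper states this theorem without proof, citing Kervaire--Milnor \cite{MIL-KER}; your proposal reconstructs essentially their argument: build the residue homomorphism $\phi\colon\Theta^{4k-1}(\partial\pi)\to\mZ/t_k\mZ$, check well-definedness by closing up two fillings to an almost-parallelizable closed manifold, realize every residue by $E_8$-plumbings and boundary connect sums, and conclude by comparing orders. That architecture is correct, and you have rightly identified the load-bearing step as the sharp divisibility $t_k\mid\sigma(X)/8$ for closed almost-parallelizable $X^{4k}$, resting on Adams' computation of the image of the $J$-homomorphism.

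There is, however, one genuine gap: your argument that $8\mid\sigma(W)$. You cone off $\partial W=\Sigma$ to form $\widehat W$ and appeal to the Hirzebruch signature formula together with ``standard Bernoulli-number arithmetic.'' This does not work. First, $\widehat W$ is only a topological manifold when $\Sigma$ is exotic, so the smooth Hirzebruch theorem does not directly apply; one can invoke Novikov's topological invariance of rational Pontryagin classes, but then the integrality of $\langle p_k,[\widehat W]\rangle$ itself needs separate justification. Second, and more seriously, nothing in the Bernoulli coefficient by itself forces $8$-divisibility for an arbitrary integral value of $\langle p_k,[\widehat W]\rangle$ --- the constraints on that Pontryagin number for almost-parallelizable manifolds are exactly the hard $J$-homomorphism input you cite elsewhere, so as written the step is circular. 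The correct and elementary route, and the one Kervaire--Milnor actually use, is algebraic: since $\partial W$ is a homotopy sphere, Poincar\'e--Lefschetz duality makes the intersection form on $H_{2k}(W;\mZ)$ modulo torsion unimodular, and parallelizability of $W$ forces the Wu class $v_{2k}$ to vanish, so the form is even; an even unimodular form has signature $\equiv 0\pmod 8$. With that replacement the rest of your outline is sound. (Minor wording: one plumbs \emph{eight} copies of the tangent disk bundle of $S^{2k}$ along the $E_8$ graph to obtain a single parallelizable block of signature $8$ with homotopy-sphere boundary, and then boundary-connect-sums $i$ such blocks to realize signature $8i$.)
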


Therfore if we choose some $\Sigma_i$ from the set or representatives in the above theorem, with $\partial W=\Sigma_i$, then
\begin{equation}\label{2}
 \sigma(W)=8(t_kd+i) \,\,\,\,\, \text{for some postive integer}\,\,\,d.
\end{equation}

Rewriting equation (\ref{1}) using the equation (\ref{2}) we get
\begin{equation}
 8(t_kd+i)=\displaystyle{\displaystyle{\frac{2^{2k}(2^{2k-1}-1)}{(2k)!}}B_k \langle p_k,\mu_{4k}\rangle}.
\end{equation}

Now we state another result from chapter 2 of Ardanza's thesis \cite{ARD}:
\begin{thm}
 For all $k > 1$, there exists a prime $p > 2k+1$ such that $p$ divides the numerator of $\displaystyle{\frac{2^{2k}(2^{2k-1}-1)}{(2k)!}B_k}$ expressed in lowest 
 terms.
\end{thm}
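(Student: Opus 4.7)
The plan is to split the expression $\frac{2^{2k}(2^{2k-1}-1)}{(2k)!}B_k$ into a ``large prime factor'' contributed by $2^{2k-1}-1$ in the numerator, and a denominator which, after full cancellation, involves only primes $\le 2k+1$. Once these two observations are in place, the statement is immediate: a prime $p > 2k+1$ appearing in the numerator cannot be killed by any prime in the denominator, so it must survive in the reduction to lowest terms.

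For the denominator, two standard facts suffice. Any prime dividing $(2k)!$ is at most $2k$. Using Hirzebruch's convention (so that $B_k$ equals $|B_{2k}|$ in the modern indexing), the von Staudt--Clausen theorem identifies the denominator of $B_k$ as the squarefree product $\prod_{(q-1)\mid 2k} q$, and every such prime $q$ satisfies $q \le 2k+1$. Combining these, every prime dividing the denominator of $\frac{B_k}{(2k)!}$ in lowest terms is bounded above by $2k+1$.

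To produce a large prime in the numerator, I would apply Bang's theorem (the $a=2$ special case of Zsygmondy) to $2^{2k-1}-1$. Since $2k-1$ is odd, the exceptional case $n=6$ of Zsygmondy does not arise, so for every $k \ge 2$ there exists a primitive prime divisor $p$ of $2^{2k-1}-1$: a prime for which the multiplicative order of $2$ modulo $p$ is exactly $2k-1$. Fermat's little theorem then gives $(2k-1)\mid (p-1)$, so $p = 1+(2k-1)m$ for some positive integer $m$. Since $p$ is odd, $m=1$ would force $p = 2k$, which cannot be prime for $k \ge 2$; therefore $m \ge 2$ and $p \ge 4k-1 > 2k+1$.

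Assembling the pieces: writing $B_k = a/b$ in lowest terms, the expression equals $\frac{2^{2k}(2^{2k-1}-1)\,a}{b\,(2k)!}$. The prime $p$ divides $2^{2k-1}-1$ and so divides the numerator, while every prime factor of $b\,(2k)!$ is at most $2k+1 < p$; hence $p$ cannot be cancelled upon reduction. The only delicate step is the case analysis excluding $m=1$ in Bang's theorem, which is precisely what turns the automatic bound $p \ge 2k$ into the strict inequality $p > 2k+1$ required by the statement. Beyond this, no analytic or topological obstacle is anticipated.
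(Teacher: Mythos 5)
Your proof is correct and complete. The paper does not actually reproduce a proof here (it defers to Chapter~2 of Ardanza's thesis), so a line-by-line comparison is not possible, but your argument is the natural and presumably intended one, and it closes cleanly. The two pillars check out: (i) Bang's theorem (Zsygmondy with base $2$) applies since $2k-1\ge 3$ is odd, so the sole exceptional exponent $n=6$ never occurs; the resulting primitive prime divisor $p$ of $2^{2k-1}-1$ has $\mathrm{ord}_p(2)=2k-1$, hence $(2k-1)\mid(p-1)$, and because $p$ is odd (it divides an odd number) the case $p-1=2k-1$, i.e.\ $p=2k$, is impossible, giving $p\ge 4k-1>2k+1$ for $k>1$; (ii) von Staudt--Clausen bounds the primes in the denominator of $B_k$ by $2k+1$, and primes dividing $(2k)!$ are at most $2k$, so no prime $>2k+1$ appears in the denominator of $\frac{2^{2k}(2^{2k-1}-1)}{(2k)!}B_k$ and $p$ survives reduction to lowest terms. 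I see no gap.
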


It is easy to see that such a prime $p$ has to divide $t_k$.

In equation (3), $\langle  p_k,\mu_{4k}\rangle$ is an integer, and the prime $p$ divides $t_k$ and $\displaystyle{\frac{2^{2k}(2^{2k-1}-1)}{(2k)!}}B_k$
so it has to divide $i$. But we have the freedom of choosing $\Sigma_i$ from the set of representatives in Theorem \ref{thm:milker}
such that $i$ is not divisible by $p$. We could always pick $i=1$ and that will work.

Summarizing the work done so far, we can always choose a $\Sigma_i$ and use the methods of subsection \ref{subsec:construct} to get a self-diffeomorphism 
$f:M\rightarrow M$ such that $[f]\in \pi_0(\text{Diff}_0(M))$ and $[f] \ne 0$ in $\pi_0(\text{Diff}(M))$.

\subsection{Showing that [f] maps to 0 in $\pi_0(\Met^{<0}(M))$}\label{subsec:met}
The diffeomorphism $f$ maps to the metric $fg_s$ in $\Met^{<0}(M)$ under the push-forward action described earlier. The metric $g_s$ is the negatively curved
metric that $M$ comes equipped with from Section \ref{sec-GTmani}. 

In this subsection we show that $fg_s$ can be joined to the metric $g_s$ by a path of metrics in $\Met^{<0}(M)$.

Denote by $B\subset M$ the closed geodesic ball centered at $p$ of radius $3\alpha$. 
Let us also denote the metric $g_s$ on $M$ restricted to $B$ as $g^0$, and note that $g^0$ is the hyperbolic metric. 
As before we identify $B\smallsetminus{p}$ with $S^{4k-3}\times(0,3\alpha]$. This idenification can be done isometrically: $B\smallsetminus{p}$ with metric
$g^0$ is isometric to $S^{4k-3}\times(0,3\alpha]$ with metric $\sinh^2(t)\overline{h}+dt^2$, where $\overline{h}$ is the Riemannian metric on the 
sphere $S^{4k-3}$ with constant curvature equal to $1$. In view of this identification, we write
\begin{displaymath}
 g^0(x,t)=\sinh^2(t)\overline{h}+dt^2.
\end{displaymath}

Recall that,\newline
\centerline {$\displaystyle f(q)  = \begin{cases} 
      q & \text{if } q \notin (S^{4k-3}\times[\alpha,2\alpha])\subset M \\
      (h_{t/\alpha}(x),t) & \text{if } q=(x,t) \in (S^{4k-3}\times[\alpha,2\alpha]) \subset M \\
     \end{cases}$}\\\\
where $h(x,s)=(h_s(x),s)$.\newline
Also recall that, $h:S^{4k-3}\times [1,2]\rightarrow S^{4k-3}\times [1,2]$ is level preserving, and identity near $1$ and $2$.

Therefore, the metric $g^1=f_*g^0$ (the push forward of $g^0$ by $f$) on $B\smallsetminus {p}$ is given by:
\begin{displaymath}
  {\displaystyle g^1(x,t)  = \begin{cases} 
      g^0(x,t) & \text{if } t \notin [\alpha,2\alpha] \\
      h_*g^0(x,t) & \text{if } t \in [\alpha,2\alpha]. \\
     \end{cases}}\\\
\end{displaymath}

Let us state a lemma from \cite{FO1}.
\begin{lemma}
 Let $\mathscr G' \subset \text{Diff}_0(S^{n-1}\times [1,2])$ be the group of all smooth isotopies $h$ of the $(n-1)$-dimensional sphere $S^{n-1}$ that are the 
 identity near $1$ and constant near $2$. Then $\mathscr G'$ is contractible.
\end{lemma}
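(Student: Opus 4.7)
The strategy is to exhibit a deformation retraction of $\mathscr G'$ onto the identity diffeomorphism, mirroring the standard contractibility of based path spaces. Viewing each $h\in\mathscr G'$ as a smooth path $t\mapsto h_t$ in $\text{Diff}(S^{n-1})$ starting from $h_1=\text{id}$ and stabilized on neighborhoods of both endpoints, I would shrink such a path back to the constant path at $\text{id}$ by reparametrizing its time variable.

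Concretely, I would construct a continuous family of smooth non-decreasing reparametrizations $\alpha_s:[1,2]\to[1,2]$ with $\alpha_0=\text{id}_{[1,2]}$, $\alpha_1\equiv 1$, and $\alpha_s(1)=1$ for every $s\in[0,1]$, and then define the contraction
\bgd
H(h,s)(x,t)=(h_{\alpha_s(t)}(x),\,t).
\edd
Then $H(h,0)=h$, $H(h,1)=\text{id}_{S^{n-1}\times[1,2]}$, and each $H(h,s)$ is a level-preserving diffeomorphism. The identity-near-$1$ condition is preserved because $\alpha_s(1)=1$ forces $h_{\alpha_s(t)}=\text{id}$ on a neighborhood of $t=1$, using that $h$ itself is the identity near $1$. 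A natural candidate for $\alpha_s$ is a smoothing of the piecewise-linear map equal to $1$ on $[1,1+s/2]$, rising with slope $1$ to $2-s$ on $[1+s/2,2-s/2]$, and equal to $2-s$ on $[2-s/2,2]$; at $s=0$ this collapses to $\text{id}$, and at $s=1$ it collapses to the constant function $1$.

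The main obstacle is simultaneously preserving the constant-near-$2$ condition defining $\mathscr G'$ and keeping $H$ continuous in the $C^\infty$ topology as $s$ leaves $0$. Since the length of the interval on which an arbitrary $h\in\mathscr G'$ is constant near $t=2$ can be arbitrarily small, no single $h$-independent family $\alpha_s$ can force $\alpha_s(t)$ to lie in the constant region of $h$ for $t$ near $2$ and arbitrary $s>0$, and any family that is itself constant near $t=2$ for every $s>0$ fails to converge to $\text{id}$ in $C^1$ as $s\to 0^+$. The resolution is to view $\mathscr G'$ as the union of the subspaces $\mathscr G'_\delta\subset\mathscr G'$ consisting of isotopies whose constant-near-$2$ region has length at least $\delta$; on each $\mathscr G'_\delta$ a uniform choice of $\alpha_s$ with $2-s\in[2-\delta,2]$ throughout yields an explicit deformation retraction, and these local retractions are assembled into a global contraction $H$ either by a partition of unity argument or by exhibiting $\mathscr G'$ as a filtered colimit of contractible subspaces along homotopy equivalences. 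This assembly step is the most delicate part of the argument; the rest is routine verification.
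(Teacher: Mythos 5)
The paper itself does not prove this lemma; it quotes it from Farrell--Ontaneda \cite{FO1}, so there is no in-paper argument to compare against, and your attempt has to be judged on its own. You correctly diagnose the central obstruction: no $h$-independent reparametrization family $\alpha_s$ can simultaneously have $\alpha_0=\mathrm{id}$, $\alpha_1\equiv 1$, be constant near $t=2$ for every $s>0$, and vary continuously in the $C^\infty$ topology at $s=0$, since $\partial_t\alpha_s(2)$ would have to jump from $1$ to $0$. But the proposed repair does not close the gap. The ``uniform choice of $\alpha_s$ with $2-s\in[2-\delta,2]$'' only makes sense for $s\in[0,\delta]$, and at $s=\delta<1$ the map $\alpha_\delta$ is nowhere near the constant map $1$, so on each $\mathscr G'_\delta$ this produces only a homotopy onto some intermediate subspace, not a contraction to a point. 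The assembly step is also not justified: the subsets $\mathscr G'_\delta$ are nested and \emph{closed} with empty interior in $\mathscr G'$ (any $h$ constant on $[2-\delta,2]$ can be $C^\infty$-approximated by elements of $\mathscr G'$ whose constant region is much shorter), so there is no open cover to which a partition of unity could be subordinated; and the $C^\infty$ topology on $\mathscr G'$ is not the colimit topology of the filtration $\bigcup_n\mathscr G'_{1/n}$, so ``a filtered colimit of contractible spaces along homotopy equivalences is contractible'' cannot be invoked without a separate comparison of topologies and a cofibration hypothesis that you have not supplied. As written, the heart of the argument is precisely the part that is missing.

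A route that does work, and sidesteps reparametrization entirely, is to linearize. The correspondence $h\longleftrightarrow X$, where $X_t=(\partial_t h_t)\circ h_t^{-1}$ is the time-dependent generating vector field on $S^{n-1}$ with $h_1=\mathrm{id}$, is a homeomorphism in the $C^\infty$ topologies between $\mathscr G'$ and the set $\mathscr X$ of smooth maps $X\colon[1,2]\to\mathfrak{X}(S^{n-1})$ that vanish on a neighborhood of $\{1,2\}$: composition, inversion, and the solution operator of $\partial_t h_t=X_t\circ h_t$ are all continuous, and $h_t$ being the identity near $1$, respectively constant near $2$, is equivalent to $X_t$ vanishing there. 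Now $\mathscr X$ is a \emph{linear} subspace of the Fr\'echet space $C^\infty\bigl([1,2],\mathfrak{X}(S^{n-1})\bigr)$ (if $X$ and $Y$ each vanish near the endpoints, so does any linear combination), hence convex, hence contractible via $(X,s)\mapsto(1-s)X$. Transporting this contraction back through the homeomorphism proves the lemma. If you insist on staying in the reparametrization picture, you would need a continuous $h$-dependent choice of reparametrization, and the regularization required to make that continuous is essentially what the passage to generating vector fields does automatically.
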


The diffeomorphism $h:S^{4k-3}\rightarrow S^{4k-3}$ that we use for the contruction of our $f$ is an element of $\mathscr G'$.
Therefore by the above Lemma, there is a path of isotopies $h^\mu \in \mathscr G'$ for $\mu\in [0,1]$, with $h^0=h$ and $h^1= id_{S^{4k-3}\times [1,2]}$. 
Each $h^\mu$ is a smooth isotopy of the sphere $S^{4k-3}$. Let us denote the final map in the isotopy $h^\mu$ as $\theta^\mu$, that is, 
\begin{displaymath}
 h^\mu(x,2)=(\theta^\mu(x),2)
\end{displaymath}
Note that $\theta^\mu:S^{4k-3}\rightarrow S^{4k-3}$ is a diffeomorphism and $\theta^0=\theta^1=id_{S^{4k-3}}$.
Define
\begin{displaymath}
 \phi^\mu:S^{4k-3}\times[\alpha,2\alpha]\rightarrow S^{4k-3}\times[\alpha,2\alpha]
\end{displaymath}
by rescaling $h$ to the interval $[\alpha,2\alpha]$, that is, $\phi^\mu(x,t)=(h^\mu_{t/\alpha}(x),t)$.
Let $\delta:[2,3]\rightarrow [0,1]$ be smooth with $\delta(2)=1$,$\delta(3)=0$, and $\delta$ constant near $2$ and $3$. Now we are in a postion to define
a path of negatively curved metrics $g^\mu$ on $B\smallsetminus {p}=S^{4k-3}\times(0,3\alpha]$:

\begin{displaymath}
  {\displaystyle g^\mu(x,t)  = \begin{cases} 
      g^0(x,t) & \text{if } t \in (0,\alpha] \\
      (\phi^\mu)_*g^0(x,t) & \text{if } t \in [\alpha,2\alpha] \\
      \sinh^2(t)(\delta(\frac{t}{\alpha})(\theta^\mu)_*\overline{h}(x)+(1-\delta(\frac{t}{\alpha}))\overline{h}(x)) + dt^2 & \text{if } t\in [2\alpha,3\alpha]\\
     \end{cases}}\\\
\end{displaymath}

Since $\delta$ and all isotopies used are constant near the endpoints of the intervals on which they are defined, it is clear that $g^\mu$ is a smooth metric
on $B\smallsetminus {p}$ and that $g^\mu$ joins $g^1$ to $g^0$. Moreover $g^\mu(x,t)=g^0(x,t)$ for $t$ near $0$ and $3$. Hence we can extend $g^\mu$ to the whole 
manifold $M$ by defining $g^\mu(q)=g^0(q)$ for $q=p$ and $g^\mu(q)=g_s(q)$ when $q\notin B$.

The metric $g^\mu(x,t)$ is equal to $g^0(x,t)$ for $t\in(0,\alpha]$; hence $g^\mu(x,t)$ is hyperbolic for $t \in (0,\alpha]$. Also, $g^\mu(x,t)$ 
is the push-forward (by $\phi^\mu$) of the hyperbolic metric $g^0$ for $t\in [\alpha, 2\alpha]$; hence $g^\mu(x,t)$ is hyperbolic for $t\in[\alpha,2\alpha]$.
For $t\in[2\alpha,3\alpha]$, the metric $g^\mu(x,t)$ is similar to the ones constructed in \cite{FJ2} or in Theorem 3.1 in \cite{O}.

It can be checked from those references that the sectional curvatures of $g^\mu$ are $\epsilon$ close to $-1$, provided $r$ is large enough. 

Therefore the $f$ that we constructed in subsection \ref{subsec:construct} maps to the metric $f_*g_s$ which can be joined by a path of metrics in 
$\Met^{<0}(M)$ to the basepoint metric $g_s$ on $M$. 

Since $[f]$ maps to $0$ in $\pi_0(\Met^{<0}(M))$ and $[f]\ne0$ in $\pi_0(\text{Diff}_0(M))$ it can be pulled back to a non-zero element in 
$\pi_1(\cT^{<0}(M))$. Hence the proof of our result is complete.

\bigskip


\end{document}